\theoremstyle{plain}
\newtheorem{thm}{Theorem}[section]
\newtheorem{lemma}[thm]{Lemma}
\newtheorem{prop}[thm]{Proposition}
\theoremstyle{definition}
\newtheorem{defn}[thm]{Definition}
\theoremstyle{remark}
\newtheorem{remark}[thm]{Remark}
\newcommand{\nc}{\newcommand}
\def\makeop#1{\expandafter\def\csname#1\endcsname
  {\mathop{\rm #1}\nolimits}\ignorespaces}
\def\makebb#1{\expandafter\def
  \csname bb#1\endcsname{{\mathbb{#1}}}\ignorespaces}
\def\makebf#1{\expandafter\def\csname bf#1\endcsname{{\bf
      #1}}\ignorespaces} 
\def\makegr#1{\expandafter\def
  \csname gr#1\endcsname{{\mathfrak{#1}}}\ignorespaces}
\def\makescr#1{\expandafter\def
  \csname scr#1\endcsname{{\EuScript{#1}}}\ignorespaces}
\def\makecal#1{\expandafter\def\csname cal#1\endcsname{{\mathcal
      #1}}\ignorespaces} 
\def\doLetters#1{#1A #1B #1C #1D #1E #1F #1G #1H #1I #1J #1K #1L #1M
                 #1N #1O #1P #1Q #1R #1S #1T #1U #1V #1W #1X #1Y #1Z}
\def\doletters#1{#1a #1b #1c #1d #1e #1f #1g #1h #1i #1j #1k #1l #1m
                 #1n #1o #1p #1q #1r #1s #1t #1u #1v #1w #1x #1y #1z}
     \def\qed{\qedmark\medbreak}%
\def\qedmark{{\enspace\vrule height 6pt width 5pt depth 1.5pt}}%
\def\Spec{{\rm Spec}}
\def\Fpbar{\overline{\bbF}_p}
\def\Fp{{\bbF}_p}
\def\Zp{{\bbZ}_p}
\def\Qbar{\overline{\bbQ}}
\newcommand{\Z}{\mathbb Z}
\newcommand{\Q}{\mathbb Q}
\newcommand{\C}{\mathbb C}
\newcommand{\npr}{\noindent }
\newcommand{\<}{\langle}   
\renewcommand{\>}{\rangle} 
\nc{\embed}{\hookrightarrow}
\newcommand{\ch}{characteristic }
\newcommand{\ac}{algebraically closed }
\newcommand{\dieu}{Dieudonn\'{e} }
\nc{\ol}{\overline}
\nc{\wt}{\widetilde}
\nc{\opp}{\mathrm{opp}}
\def\ul{\underline}
\begin{document}
\renewcommand{\thefootnote}{\fnsymbol{footnote}}
\setcounter{footnote}{-1}
\numberwithin{equation}{section}


\title[Siegel threefold]
  {Geometry of the Siegel modular threefold with paramodular
  level structure}
\author{Chia-Fu Yu}
\address{
Institute of Mathematics \\
Academia Sinica \\
128 Academia Rd.~Sec.~2, Nankang\\ 
Taipei, Taiwan and NCTS (Taipei Office)}
\email{chiafu@math.sinica.edu.tw}

\date{\today}

\begin{abstract}
In this paper we extend some results of Norman and Oort and of de Jong, 
and give an explicit description of the geometry of the Siegel 
modular threefold with paramodular level structure. 
We also discuss advantages and restrictions of three standard methods
for studying moduli spaces of abelian varieties.
\end{abstract} 

\maketitle


\section{Introduction}
\label{sec:01}
In this paper we study the arithmetic model of the Siegel modular 
three-fold with
paramodular level structure at a prime $p$. This is a very special
case of geometric objects considered in the works of Norman and Oort
\cite{norman-oort}, and of de Jong \cite{dejong:ag}. We extend their
work and determine the singularities in this case. Our result can be
used for  computing the cohomology groups of the Siegel 3-fold by the
Picard-Leschetz formula (see \cite{sga7_2}, Expose XV).  \\

Let $p$ be a rational prime number, $N\ge 3$ a prime-to-$p$
positive integer. We choose a primitive $N$th root of unity $\zeta_N$ in
$\Qbar\subset \C$ and an embedding $\Qbar\embed \Qbar_p$.
Let $\bfA_{2,p,N}$ denote the moduli scheme over $\Z_{(p)}[\zeta_N]$ of
polarized abelian surfaces $(A,\lambda,\eta)$ with polarization degree
$\deg \lambda=p^2$ and with a symplectic level $N$-structure with
respect to $\zeta_N$. We denote by $\calA_{2,p,N}:=\bfA_{2,p,N}\otimes
\Fpbar$ the reduction modulo $p$ of the moduli scheme $\bfA_{2,p,N}$. \\

In \cite{norman-oort} Norman and Oort studies the {\it $p$-rank
  stratification} on Siegel moduli spaces (the case for {\it principally}
  polarized abelian varieties was studied earlier in Koblitz
  \cite{koblitz:thesis}) which we recall as follows. An abelian
  variety a field $K$ of \ch $p$ is said to have 
$p$-rank $f$ if $\dim_{\Fp} A[p](\ol K)=f$, which is an integer
between zero and $\dim A$. Conversely, given any integer $0\le f\le g$,
  there is a $g$-dimensional abelian variety with $p$-rank $f$. The
  discrete invariant $p$-rank defines locally closed subsets in the
  moduli space; it is showed in \cite{norman-oort} 
  that these subsets form a
  stratification. We now focus on the Siegel 3-fold. 
For each integer $0\le f\le 2$, let
$V_f\subset \calA_{2,p,N}$
(resp. $V_{\le f}\subset \calA_{2,p,N}$) be the reduced subscheme
consisting of points with $p$-rank equal to $f$ (resp. equal to or
less than $f$). The ordinary locus, which parametrizes the ordinary
  abelian varieties, is the $p$-rank 2 stratum; the
non-ordinary locus is $V_{\le 1}$. We have the following fundamental 
results for $\bfA_{2,p,N}$ (see \cite{oort:olso}, Theorem 2.3.3,
  \cite{norman-oort}, Theorems 3.1 and 4.1, \cite{dejong:ag},
  Theorem 1.12 and Proposition 3.3): 

\begin{thm}\label{11}\

{\rm (1)} The structure morphism 
$\bfA_{2,p,N}\to \Spec\Z_{(p)}[\zeta_N]$ is 
flat and a complete intersection morphism of relative
dimension 3. The fibers are geometrically irreducible. 

{\rm (2)} The stratum $V_f$ is dense in $V_{\le f}$ and $\dim V_f=1+f$
    for $f=0,1,2$. 

{\rm (3)} The non-ordinary locus $V_{\le 1}$ has two irreducible
components.   
  
\end{thm}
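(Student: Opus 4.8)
The plan is to localize the problem to the open $p$-rank-one stratum and then combine an explicit Dieudonn\'e-theoretic local computation with a global irreducibility argument.

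\emph{Step 1: reduction to $V_1$.} By \thmref{11}(2), $V_1$ is dense in $V_{\le 1}$ and $\dim V_1=2$, so every irreducible component of $V_{\le 1}$ is the Zariski closure of a component of $V_1$, and there are no inclusions among these closures. Hence it suffices to show that the locally closed subscheme $V_1\subset\calA_{2,p,N}$ — the locus of $p$-rank exactly $1$, equivalently the Newton stratum with slopes $(0,\tfrac12,\tfrac12,1)$ — has exactly two irreducible components, each of dimension $2$. A geometric point of $V_1$ is a triple $(A,\lambda,\eta)$ with $A$ isogenous to the product of an ordinary and a supersingular elliptic curve, $\lambda$ a polarization of type $(1,p)$, and $\ker\lambda$ the canonical rank-$p^2$ subgroup scheme $\cong\mu_p\times(\Z/p\Z)$ of $A[p]$.

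\emph{Step 2: at most two components.} Following Norman--Oort \cite{norman-oort}, \S3--4 and de Jong \cite{dejong:ag}, \S3, I would describe the complete local ring $\widehat{\mathcal O}_{\calA_{2,p,N},x}$ at a point $x\in V_{\le 1}$ via the deformation theory of the polarized $p$-divisible group $(A[p^\infty],\lambda)$, presented by a display matrix, subject to the constraint that $\ker\lambda$ have order $p^2$ — the feature absent from the principally polarized space $\calA_{2,1,N}$. In these coordinates ``$p$-rank $\le 1$'' is the vanishing of one element $h\in\widehat{\mathcal O}_{\calA_{2,p,N},x}$, a Hasse-type invariant (the relevant minor of the Hasse--Witt matrix adapted to the type-$(1,p)$ structure). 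The key point is to show that $h$ factors as $h=(\text{unit})\cdot h_1 h_2$ with $h_1$ and $h_2$ generating distinct prime ideals (at a general point of $V_1$ only one of them passes through $x$; along the supersingular locus both do). Consequently $V_{\le 1}$ has at most two analytic branches at any point, hence at most two irreducible components provided the branch types are shown to be globally coherent.

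\emph{Step 3: exactly two components, and their irreducibility.} To get at least two, and to organize the two branch families of Step 2 into two irreducible pieces, I would use a discrete invariant of $(A[p^\infty],\lambda)$ at $x$ — for instance whether $\ker(F_A)$ meets $\ker\lambda$ only in its connected part or interacts with $\ker\lambda$ through Verschiebung, equivalently which of $h_1,h_2$ cuts out the branch through $x$ — which is constant on each branch family and assumes two values. Let $W_1,W_2\subseteq V_{\le 1}$ be the two resulting closed subsets, so $W_1\cup W_2=V_{\le 1}$ and $W_1\neq W_2$. Irreducibility of each $W_i$ (of dimension $2$) I would obtain by exhibiting it, up to the finite level-$N$ cover, as a quotient of an irreducible parameter space — fibering over the ordinary locus of the modular curve with level-$N$ structure, with a Rapoport--Zink-type datum controlling the supersingular part — together with the standard big-monodromy statement, so that adjoining the level-$N$ structure does not disconnect it. With the bound ``$\le2$'' from Step 2 this yields exactly two components.

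\emph{Main obstacle.} The technical heart is the local computation of Step 2: pushing the display/Dieudonn\'e calculation through with the \emph{non-principal}, type-$(1,p)$ polarization to obtain the precise factorization $h=(\text{unit})h_1h_2$ — this is exactly what produces a second component where $\calA_{2,1,N}$ has only one. A secondary difficulty is the passage from ``at most two branches locally'' to ``exactly two components globally'', i.e.\ proving each $W_i$ is irreducible; the monodromy input is what makes this work.
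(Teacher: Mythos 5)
First, a point of comparison: the paper does not prove Theorem~\ref{11} at all. It is recalled as background, with the flatness/complete-intersection and stratification statements attributed to \cite{oort:olso} and \cite{norman-oort} and the two-component statement to \cite{dejong:ag}; the paper's own contribution begins with Theorem~\ref{13}. Measured against the full statement, your proposal has a structural gap: it addresses only part (3), and it does so by \emph{assuming} parts (1) and (2) (Step 1 uses the density and dimension statements of (2) to reduce to $V_1$, and Step 2 tacitly uses that $V_{\le 1}$ is a divisor cut out locally by one equation, which rests on (1) and (2)). Parts (1) and (2) are the foundational and genuinely hard inputs here --- they are the main theorems of \cite{norman-oort} --- and nothing in the proposal engages with them.

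Within part (3) there is also a concrete error in Step 1: it is not true that every geometric point of $V_1$ has $\ker\lambda\simeq\mu_p\times(\Z/p\Z)$. For a $p$-rank $1$ surface over $k=\bar k$ one has $A[p]\simeq\mu_p\times(\Z/p\Z)\times H[p]$ with $H$ the local-local (supersingular) part, there are no nonzero homomorphisms between the multiplicative, \'etale and local-local factors, and the only self-dual order-$p^2$ subgroups are $\mu_p\times(\Z/p\Z)$ and the local-local group $H[p]$. Both types occur, and this dichotomy is precisely (a form of) the discrete invariant that separates the two components --- so your Step 1 assertion would collapse $V_1$ to a single family and contradicts your own Step 3. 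Finally, the two load-bearing claims of Steps 2--3 (the coherent factorization $h=(\mathrm{unit})\,h_1h_2$ of the Hasse-type section into two prime branches, and the irreducibility of each $W_i$ via a big-monodromy argument) are stated as goals rather than established, so as it stands the proposal is a program consistent with the de Jong/Norman--Oort strategy rather than a proof.
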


Let $\calS_{2,p,N}\subset \calA_{2,p,N}$
denote the supersingular locus, which is the reduced closed subscheme
over $\Fpbar$ consisting of supersingular abelian varieties. 
Recall that an abelian variety $A$ in \ch $p$ is called 
{\it supersingular} if it is isogenous to a product of 
supersingular elliptic curves over an \ac field $k$; 
it is called {\it superspecial} if it is isomorphic to a product 
of supersingular elliptic curves over $k$. Let $\Lambda_{2,1,N}\subset
\calA_{2,1,N}(\Fpbar)$ be the set of superspecial points in the moduli
space $\calA_{2,1,N}$ of principally polarized abelian surfaces with
level $N$-structure. Let $\Lambda\subset \calS_{2,p,N}(\Fpbar)$ be the
subset consisting of superspecial points $(A,\lambda,\eta)$ 
such that $\ker \lambda\simeq \alpha_p\times \alpha_p$. 
The following is a description of the supersingular
locus (see \cite{yu:ss_siegel}, Theorem 4.7):

\begin{thm}\label{12} \
   
{\rm (1)} The scheme $\calS_{2,p,N}$ is equi-dimensional and
  each irreducible component is isomorphic to
  $\bfP^1$ over $\Fpbar$.

{\rm (2)} The scheme $\calS_{2,p,N}$ has $|\Lambda_{2,1,N}|$
irreducible components. 

{\rm (3)} The singular locus of $\calS_{2,p,N}$ is the subset $\Lambda$.
Moreover, at each singular point there are $p^2+1$ irreducible
components passing through and any two of them intersect transversely.

\end{thm}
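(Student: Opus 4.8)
The plan is to follow the method of Li and Oort for supersingular loci, specialized to genus $2$ and polarization degree $p^2$: first realize $\calS_{2,p,N}$ as a union of projective lines built from superspecial abelian surfaces by the polarized flag type quotient construction, and then analyze the superspecial points locally via Dieudonn\'e theory. Throughout one uses that a supersingular $p$-divisible group over $\Fpbar$ is unique up to isogeny, that every superspecial abelian surface is isomorphic to $E_0^2$ for a fixed supersingular elliptic curve $E_0$, and that $\End(E_0^2)$ is a maximal order in the quaternion $\Q$-algebra ramified exactly at $p$ and $\infty$; this reduces the geometry to bookkeeping with isogenies of $E_0^2$ and with lattices.

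For parts (1) and (2) the key is a uniformization. To $(B,\mu,\eta)\in\Lambda_{2,1,N}$ one attaches a $\bfP^1$ as follows: the group scheme $B[F]$ is isomorphic to $\alpha_p\times\alpha_p$, and for each line $\ell\subset B[F]$ one puts $A_\ell=B/\ell$. Since any order-$p$ subgroup is isotropic for the Weil pairing $e^\mu_p$, the polarization $\mu$ induces a polarization $\lambda_\ell$ on $A_\ell$ with $\ker\lambda_\ell=\ell^\perp/\ell$ of order $p^2$, so $(A_\ell,\lambda_\ell)$ has polarization degree $p^2$, while the prime-to-$p$ level structure transports to an $\eta_\ell$. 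Because $N\ge 3$ kills all automorphisms, $\ell\mapsto(A_\ell,\lambda_\ell,\eta_\ell)$ defines a morphism $j_B\colon\bfP^1\to\calS_{2,p,N}$. One then checks, by the standard arguments with the canonical filtration, that $j_B$ is a closed immersion; that every $(A,\lambda,\eta)\in\calS_{2,p,N}(\Fpbar)$ lies on some $j_B(\bfP^1)$ --- when $a(A)=1$ one recovers the datum from the unique degree-$p$ isogeny $\pi\colon B\to A$ with $B$ superspecial, for which $\pi^\vee\lambda\pi=p\mu$ with $\mu$ necessarily principal, while when $A$ is superspecial there is a finite nonempty set of such $\pi$; and that $j_B(\bfP^1)=j_{B'}(\bfP^1)$ forces $(B,\mu,\eta)\cong(B',\mu',\eta')$. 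Hence $\calS_{2,p,N}=\bigcup_{(B,\mu,\eta)\in\Lambda_{2,1,N}}j_B(\bfP^1)$ is equi-dimensional of dimension $1$, its irreducible components are the curves $j_B(\bfP^1)\cong\bfP^1$, and there are $|\Lambda_{2,1,N}|$ of them.

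For part (3), since each component is a smooth curve, the singular points of $\calS_{2,p,N}$ are exactly those lying on two or more components, and $x=(A,\lambda,\eta)$ lies on the component of $(B,\mu,\eta_B)$ precisely when there is a degree-$p$ isogeny $\pi\colon B\to A$ with $B$ superspecial and $\pi^\vee\lambda\pi=p\mu$ (the level structures then match automatically, $N$ being prime to $p$). When $a(A)=1$ this $\pi$ is unique --- $B$ is the source of Oort's minimal isogeny and, in genus $2$, carries a principal polarization --- so $x$ is a smooth point; when $A$ is superspecial with $\ker\lambda\not\cong\alpha_p\times\alpha_p$, a direct and easier Dieudonn\'e-module computation again leaves only one admissible $\pi$, so $x$ is smooth. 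When $A$ is superspecial and $\ker\lambda\cong\alpha_p\times\alpha_p$, that is $x\in\Lambda$, the admissible $\pi$ are in bijection --- via Dieudonn\'e theory and the Rosati involution --- with the lines in a $2$-dimensional $\bbF_{p^2}$-vector space canonically attached to $x$, the field $\bbF_{p^2}$ appearing as the residue field of the unramified quadratic extension of $\Q_p$ through the local structure of $\End(E_0^2)$ and the polarization form. There are $p^2+1$ such lines, each yielding a distinct component, so exactly $p^2+1$ components pass through $x$, and this identifies the singular locus with $\Lambda$. For transversality one argues with the complete local rings at $x$: each of the $p^2+1$ branches is regular (an open subscheme of a $\bfP^1$), and from the Dieudonn\'e module of $A$ with its quasi-polarization one checks that any two branches have distinct tangent lines in the Zariski tangent space of $\calS_{2,p,N}$ at $x$, which is the asserted pairwise transversality.

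The main obstacle is the local analysis at $x\in\Lambda$: pinning down exactly which degree-$p$ isogenies $\pi\colon B\to A$ occur --- equivalently, describing the locus inside the deformation space of $(A,\lambda)$ along which the surface remains supersingular --- and extracting from it both the count $p^2+1$ and the pairwise transversality. This demands careful work with the Dieudonn\'e module (or display) of $(A,\lambda)$ at a superspecial point with $\ker\lambda\cong\alpha_p\times\alpha_p$, and with the action of the relevant unitary group over $\Q_p$; it is the genus-$2$, degree-$p^2$ counterpart of the local computations of Li--Oort, and of Katsura--Oort in the principally polarized case. Everything else --- the uniformization, the indexing of the components by $\Lambda_{2,1,N}$, and the identification of the singular locus with the superspecial points having $\ker\lambda\cong\alpha_p\times\alpha_p$ --- then reduces to formal manipulations with isogenies of superspecial abelian surfaces and with the prime-to-$p$ level structure.
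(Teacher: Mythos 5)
The paper itself does not prove Theorem~\ref{12}: it is quoted verbatim from \cite{yu:ss_siegel}, Theorem~4.7, so there is no in-text argument to compare against. Your sketch follows the same strategy as that reference (and as Li--Oort and Katsura--Oort before it): uniformize $\calS_{2,p,N}$ by projective lines attached to the superspecial principally polarized surfaces $(B,\mu,\eta)\in\Lambda_{2,1,N}$ via quotients $A_\ell=B/\ell$ by the $\alpha_p$-subgroups $\ell\subset B[F]\simeq\alpha_p\times\alpha_p$, then study the superspecial points by Dieudonn\'e theory. That is the right architecture, and your reductions of (1) and (2) to the injectivity of $j_B$ and the recovery of $(B,\mu)$ from a generic ($a=1$) point via the minimal isogeny are the standard and correct steps. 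One small imprecision: it is $p\mu$, not $\mu$, that descends along $\pi\colon B\to A_\ell$ (any $\ell$ of order $p$ is isotropic for $e^{p\mu}$ on $B[p]$), giving $\pi^\vee\lambda_\ell\pi=p\mu$ and $\ker\lambda_\ell=\ell^\perp/\ell$ of order $p^2$; you use the correct relation later, so this is cosmetic.

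The genuine gap is the one you flag yourself: for part (3) everything of substance happens in the local analysis at a point $x\in\Lambda$, namely the proof that exactly $p^2+1$ of the curves $j_B(\bfP^1)$ pass through $x$ (i.e.\ that the admissible isogenies $\pi\colon B\to A$ are parametrized by $\bfP^1(\bbF_{p^2})$, with the $\bbF_{p^2}$-structure coming from the Frobenius/Verschiebung action on $M/(F,V)M$ and the quasi-polarization) and that the resulting branches are pairwise transverse in the two-dimensional tangent space of $\calS_{2,p,N}$ at $x$. Your proposal describes what must be shown but does not carry out the lattice/Dieudonn\'e-module computation, so as written it establishes the framework for (1) and (2) but only reduces (3) to the key local claim rather than proving it. Note also that this transversality statement interacts with Theorem~\ref{13}: the ambient space $\calA_{2,p,N}$ is itself singular (a cone $t_{11}t_{22}=t_{12}t_{21}$) exactly at the points of $\Lambda$, so ``transverse'' must be interpreted inside the four-dimensional Zariski tangent space there; a careful write-up of your step would have to address this.
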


See \cite{yu:ss_siegel}, Corollary 3.3 and Theorem 4.1 for a precise
formula for $|\Lambda_{2,1,N}|$ and $|\Lambda|$, respectively. 

In this paper we prove 

\begin{thm}\label{13}
  The moduli scheme $\bfA_{2,p,N}$ is regular and smooth over $\Spec
  \Z_{(p)}[\zeta_N]$ exactly away from the subset $\Lambda$. At each singular
  point $x$ (in $\Lambda$) in the special fiber $\calA_{2,1,N}$, the
  formal completion of the local ring at $x$ is isomorphic to 
\[
  W(\Fpbar)[[t_{11},t_{12},t_{21},t_{22}]]/(p+t_{11}t_{22}-t_{12}t_{21}),
  \]
where $W(\Fpbar)$ is the ring of Witt vectors over
$\Fpbar$. Consequently, the moduli space $\calA_{2,p,N}$ is normal.
\end{thm}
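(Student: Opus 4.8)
The plan is to compute the complete local rings $\widehat{\mathcal O}_{\bfA_{2,p,N},x}$ by deformation theory, to show they are always regular, to locate the points where the structure morphism fails to be smooth, and then to deduce normality of the special fibre. Since $p\nmid N$, the level-$N$ structure is \'etale and contributes nothing locally, so by Serre--Tate theory $\widehat{\mathcal O}_{\bfA_{2,p,N},x}$ at a point $x=(A,\lambda,\eta)$ of the special fibre over $\Fpbar$ is the universal deformation ring over $W(\Fpbar)$ of the polarized $p$-divisible group $(A[p^\infty],\lambda)$; at points of the generic fibre smoothness is classical. By Grothendieck--Messing / crystalline Dieudonn\'e theory, deforming $A[p^\infty]$ over an Artinian local $W(\Fpbar)$-algebra $R$ amounts to lifting the Hodge filtration $H\subseteq\overline M:=M/pM$ inside the Dieudonn\'e module $M=M(A[p^\infty])$, a free $W(\Fpbar)$-module of rank $4$ with its Frobenius and Verschiebung operators $F,V$ (so $FV=VF=p$); deforming the degree-$p^2$ polarization further requires the lift of $H$ to be isotropic for the alternating form $\langle\;,\;\rangle$ induced by $\lambda$, which takes values in $W(\Fpbar)$ but is not perfect, the dual lattice satisfying $pM^{\vee}\subseteq M\subseteq M^{\vee}$ with $M^{\vee}/M\cong M(\ker\lambda)$ of order $p^2$.

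I would then run through the isomorphism types of $(A[p^\infty],\lambda)$ according to the $p$-rank and $a$-number of $A$. When $A$ is ordinary, of $p$-rank $1$, supersingular with $a(A)=1$, or superspecial with $\ker\lambda\cong E[p]$ for a supersingular elliptic curve $E$ (which happens, e.g., for $E_1\times E_2$ polarized by $\lambda_{E_1}\oplus p\lambda_{E_2}$), the functor of isotropic lifts of $H$ is pro-represented by a formally smooth $W(\Fpbar)$-algebra of relative dimension $3$, in the spirit of \cite{norman-oort} and \cite{dejong:ag}; this settles the smooth locus. The only remaining case is $x\in\Lambda$: $A$ superspecial (so $a(A)=2$) and $\ker\lambda\cong\alpha_p\times\alpha_p$.

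For $x\in\Lambda$ I would compute explicitly. Superspeciality forces $FM=VM$, so on $\overline M$ both $F$ and $V$ have image and kernel equal to the same $2$-plane $H=FM/pM=VM/pM$, the Hodge filtration; together with $\ker\lambda\cong\alpha_p\times\alpha_p$ (so that $F$ and $V$ act as zero on $M^{\vee}/M$) one can pick a basis of $M$ putting $F$, $V$ and $\langle\;,\;\rangle$ into explicit normal forms. A lift of $H$ to an isotropic direct summand $\widetilde H\subseteq M\otimes_{W(\Fpbar)}R$ reducing to $H$ is then the graph of a linear map with matrix $(t_{ij})_{1\le i,j\le 2}$, $t_{ij}\in\mathfrak m_R$; requiring $\widetilde H$ to be isotropic for $\langle\;,\;\rangle$ over $R$ (not merely over $R/p$) and stable under the $R$-lift of $F$ --- i.e.\ that it genuinely be the Hodge filtration of a deformation of the polarized $p$-divisible group --- collapses, after a harmless change of basis, to the single relation $p+t_{11}t_{22}-t_{12}t_{21}=0$. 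Hence $\widehat{\mathcal O}_{\bfA_{2,p,N},x}\cong W(\Fpbar)[[t_{11},t_{12},t_{21},t_{22}]]/(p+t_{11}t_{22}-t_{12}t_{21})$. I expect this to be the main obstacle: one has to keep the full Grothendieck--Messing (or display) datum over a general Artinian base, since reducing mod $p$ replaces the relation by the quadric cone $t_{11}t_{22}=t_{12}t_{21}$ and loses the essential term $p$, and recovering that term (with the right sign) is where the real work lies.

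Finally I would read off the consequences. In $R_0:=W(\Fpbar)[[t_{11},t_{12},t_{21},t_{22}]]/(p+t_{11}t_{22}-t_{12}t_{21})$ one has $p=-(t_{11}t_{22}-t_{12}t_{21})\in(t_{11},t_{12},t_{21},t_{22})$, so the maximal ideal is generated by the four elements $t_{ij}$ and $R_0$ is a regular local ring of dimension $4$; with the preceding paragraph this shows $\bfA_{2,p,N}$ is regular at every point. Its special fibre at $x\in\Lambda$ is $R_0/pR_0\cong\Fpbar[[t_{11},t_{12},t_{21},t_{22}]]/(t_{11}t_{22}-t_{12}t_{21})$, the affine quadric cone over $\bfP^1\times\bfP^1$, which is singular at the origin, so the structure morphism is not smooth at $x$; away from $\Lambda$ it is smooth by the previous paragraph. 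Hence $\bfA_{2,p,N}$ is regular and smooth over $\Spec\Z_{(p)}[\zeta_N]$ precisely away from $\Lambda$. The quadric cone is a hypersurface in a regular local ring, hence Cohen--Macaulay, and its singular locus --- the common zero set of $t_{11}t_{22}-t_{12}t_{21}$ and its partials $t_{22},t_{21},t_{12},t_{11}$ --- is the closed point alone, of codimension $3\ge 2$; so it satisfies $(R_1)$ and $(S_2)$ and is normal. Since $\calA_{2,p,N}$ is regular away from $\Lambda$ and has normal complete local rings at the points of $\Lambda$, it is normal.
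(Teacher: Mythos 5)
Your overall strategy (Serre--Tate plus Grothendieck--Messing to reduce to lifting an isotropic Hodge filtration, case analysis by $p$-rank and $a$-number for the smooth locus, explicit computation at $\Lambda$, then $(R_1)+(S_2)$ for normality) is the same as the paper's Section 2, and the smooth-locus and normality parts are sound. But there is a genuine gap at the step you yourself flag as ``where the real work lies'': the claim that the isotropy condition on $\widetilde H$ \emph{collapses to the single relation} $p+t_{11}t_{22}-t_{12}t_{21}=0$ is asserted, not proved, and it is not what crystalline theory gives you directly. Grothendieck--Messing controls deformations only along divided-power thickenings, so over the universal base it identifies the defining power series $f$ of the deformation ring inside $W(\Fpbar)[[t_{11},t_{12},t_{21},t_{22}]]$ only modulo $\grm_R^{p}$: one obtains $f\equiv p+t_{11}t_{22}-t_{12}t_{21}\ (\mathrm{mod}\ \grm_R^{p})$, with unknown higher-order terms whose coefficients may moreover involve $p$. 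To conclude that $R/(f)$ is isomorphic to $W(\Fpbar)[[t_{ij}]]/(p+t_{11}t_{22}-t_{12}t_{21})$ one needs a formal classification of non-degenerate double points over the complete local base $W(\Fpbar)$, including the refinement that the ``constant term'' can be taken to be exactly $p$ and not just some element congruent to $p$; this is the paper's Proposition 2.6 together with Remark 2.8.5, and it requires $p>2$. Your proposal supplies no such argument. Relatedly, the case $p=2$ is not addressed at all: there $\grm_R^{p}=\grm_R^{2}$ sees nothing of the quadratic term and the quadratic-form normalization fails, which is why the paper switches to the local model method (de Jong, Rapoport--Zink) in Section 3 to prove the theorem in all residue characteristics.

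A smaller inaccuracy pointing in the same direction: in Grothendieck--Messing theory the lift $\widetilde H$ of the Hodge filtration is \emph{not} required to be stable under a lift of $F$; the crystal determines $F$ on the deformed Dieudonn\'e module, and the only conditions on $\widetilde H$ are that it be a direct summand lifting $H$ and isotropic for the pairing. This does not change the expected answer, but it suggests the explicit computation at $x\in\Lambda$ has not actually been carried through, which is precisely where the congruence-versus-equality issue above would surface.
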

 
Theorem~\ref{13} improves Theorem~\ref{11} (1). 
Theorems~\ref{11}-\ref{13} provide a good understanding of the
geometry of the moduli scheme $\bfA_{2,p.N}$. As the
singularities are non-degenerate ordinary double points, one can use
the Picard-Leschetz formula to compute the cohomology groups of the
moduli space $\bfA_{2,p,N}$. We intend to continue the work along this
direction. \\

The paper is organized as follows. In Section 2 we study the
singularity of the moduli scheme using the crystalline theory when
$p>2$. We also discuss the classification of deformations of
non-degenerate double points. In Section 3 we determine the singularity
of the moduli scheme using the local model for arbitrary residue
characteristic. In Section 4 we explain that some higher terms of the
defining equation in Norman's first example in \cite{norman:algo} is
required in order to describe the non-ordinary locus. 

\section{Singularities using the crystalline theory}
\label{sec:02}

In this section we use the crystalline theory to determine the
singularities of the moduli scheme $\bfA_{2,p.N}$ for $p>2$. Standard
references for the Grothendieck-Messing deformation theory are
\cite{grothendieck:bt} and \cite{messing:bt}.

\subsection{The smooth locus of $\bfA_{2,p,N}$}
\label{sec:21}
Let $k$ be an \ac field of \ch $p$. Let $W:=W(k)$ be the ring of Witt
vectors over $k$, and let $B(k)$ be the fraction field of $W(k)$. Let
$\sigma$ be the Frobenius map on $W$ and $B(k)$, respectively. 
For a $W$-module $M$ and a subset $S\subset M$, we denote by $<S>_W$
the $W$-submodule generated by $S$. Similarly, $<S>_{B(k)}\subset
M\otimes \Q_p$ denotes the vector subspace over $B(k)$ generated by
$S$. In this paper we use the covariant \dieu theory. Let $A$ be an
abelian variety over $k$. Denote by $M(A)$ the \dieu module of $A$,
which is the linear dual $\Hom_W(M^*(A), W(1))$ of the classical \dieu
module $M^*(A)$ of $A$. \dieu modules considered here are 
finite and free as $W$-modules.  

For convenience of discussion, we introduce the following definition.
\begin{defn}\label{21}
  A point $(A,\lambda,\eta)$ in $\bfA_{2,p,N}(k)$ is called {\it
  Lagrangian} if there exists a $W$-basis $X_1,X_2,
  Y_1,Y_2$ for $M=M(A)$ such that $Y_1,Y_2\in VM$,
\[ \<X_1,Y_1\>=-\<Y_1,X_1\>=1,\quad  \<X_2,Y_2\>=-\<Y_2,X_2\>=p. \]
and the other pairings are zero. 
\end{defn}

\begin{remark}\label{22}
  An equivalent definition (for general Siegel moduli spaces) is that
  a quasi-polarized \dieu module $M$ is called Lagrangian if there is
  a maximally isotropic $W$-sublattice $L\subset M$ such that $L$ is
  co-torsion free and contained in $VM$. This is also equivalent to
  the condition that the corresponding polarized abelian variety can
  be lifted over $W$ (cf. \cite{yu:lift}).  
\end{remark}

\begin{lemma}\label{23}
  If $x=(A,\lambda,\eta)\in\bfA_{2,p,N}(k)$ is a Lagrangian point,
  then the moduli space $\bfA_{2,p,N}$ is smooth at $x$.  
\end{lemma}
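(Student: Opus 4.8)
The plan is to identify $\widehat{\O}_{\bfA_{2,p,N},x}$ (after a harmless base change to $W:=W(k)$) with a formal power series ring over $W$ in three variables by Grothendieck--Messing deformation theory, so that $\bfA_{2,p,N}$ is formally smooth over $\Z_{(p)}[\zeta_N]$ at $x$, hence smooth. The level structure $\eta$ is irrelevant here: $A[N]$ is \'etale and $N$ is prime to $p$, so $\eta$ lifts uniquely along any infinitesimal thickening, and it suffices to deform $(A,\lambda)$. Recall the shape of the theory (valid under the standing hypothesis $p>2$ of this section): for a small surjection $S\twoheadrightarrow k$ of Artinian local $W$-algebras with square-zero kernel, deformations of $A$ to $S$ are the rank-$2$ direct summands $\wt{\Fil}\subset M_S:=M\otimes_W S$ lifting the Hodge filtration $\Fil_0:=VM/pM\subset M/pM$, and such a lift carries a (necessarily unique) lift of $\lambda$ if and only if $\wt{\Fil}$ is isotropic for the $S$-bilinear extension $\langle\,,\,\rangle_S$ of the quasi-polarization form on $M=M(A)$ --- this is the lifting criterion for the homomorphism $\lambda\colon A\to A^\vee$, applied to the induced map of Dieudonn\'e crystals. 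Passing to the universal case, deformations of $A$ alone are pro-represented by $W[[t_{11},t_{12},t_{21},t_{22}]]$, and the deformations of $(A,\lambda)$ form the closed subfunctor cut out by the isotropy condition.

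Here is where the Lagrangian hypothesis enters. Fix a Lagrangian $W$-basis $X_1,X_2,Y_1,Y_2$ of $M$ as in Definition~\ref{21}. Since $\ol{Y}_1,\ol{Y}_2$ lie in $VM/pM$ and $\dim_k(VM/pM)=\dim A=2$, we have $\Fil_0=\langle\ol{Y}_1,\ol{Y}_2\rangle_k$; hence every rank-$2$ direct summand of $M_S$ lifting $\Fil_0$ has, by Nakayama, a unique basis
\[ v_1=Y_1+t_{11}X_1+t_{12}X_2,\qquad v_2=Y_2+t_{21}X_1+t_{22}X_2, \]
with the $t_{ij}$ lying in the maximal ideal; this realizes the identification with $W[[t_{ij}]]$ above. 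As $\langle\,,\,\rangle_S$ is alternating, isotropy of $\wt{\Fil}$ is the single equation $\langle v_1,v_2\rangle_S=0$, and expanding it with the relations of Definition~\ref{21} (only $\langle X_1,Y_1\rangle=1$ and $\langle X_2,Y_2\rangle=p$ survive) gives $\langle v_1,v_2\rangle_S=p\,t_{12}-t_{21}$. Therefore the deformation functor of $(A,\lambda,\eta)$ is pro-represented by
\[ W[[t_{11},t_{12},t_{21},t_{22}]]/(t_{21}-p\,t_{12})\ \cong\ W[[t_{11},t_{12},t_{22}]], \]
a power series ring over $W$ in three variables; so $\widehat{\O}_{\bfA_{2,p,N},x}$ (after base change to $W$) is regular and smooth over $W$, and $\bfA_{2,p,N}$ is smooth over $\Z_{(p)}[\zeta_N]$ at $x$ of relative dimension $3$.

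The linear algebra is immediate once the Lagrangian basis is in hand, so the substantive step is the deformation-theoretic input: that lifting the \emph{degree-$p^2$} polarization $\lambda$ is equivalent to isotropy of $\wt{\Fil}$ for the pairing $\langle\,,\,\rangle_S$, which is alternating but \emph{not} perfect over $S$ (its elementary divisors being $1,1,p,p$, by Definition~\ref{21}), together with the divided power bookkeeping needed to invoke Grothendieck--Messing in residue characteristic $p$. Finally, observe that it is precisely the condition $Y_1,Y_2\in VM$ of Definition~\ref{21} that pushes the obstruction equation into the multiplicative form $t_{21}=p\,t_{12}$, rather than an additive form such as $p+t_{11}t_{22}-t_{12}t_{21}$, which is what occurs at the non-Lagrangian (superspecial) points in Theorem~\ref{13}.
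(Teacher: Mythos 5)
Your proof is correct and follows essentially the same route as the paper: choose the Lagrangian basis, identify the Hodge filtration as $\langle \ol Y_1,\ol Y_2\rangle_k$, parametrize lifts by $t_{ij}$, and observe that the isotropy condition $\langle \wt Y_1,\wt Y_2\rangle=p\,t_{12}-t_{21}$ has nonzero linear term, forcing smoothness. The only (harmless) overstatement is your claim that the full deformation ring is exactly $W[[t_{ij}]]/(t_{21}-p\,t_{12})$ --- iterating Grothendieck--Messing beyond first order can introduce higher-order corrections to this equation --- but the paper likewise only records the relation modulo $\grm^2$, which already suffices for the conclusion.
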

\begin{proof}
  We choose a $W$-basis $X_1,X_2, Y_1,Y_2$ for $M=M(A)$ as in
  Definition~\ref{21}. We compute the Hodge filtration ${\rm Fil}$ of
  $H_1^{\rm DR}(A)=M/pM$:
\[ {\rm Fil}=VM/pM=<Y_1,Y_2>_k. \] 
  The first order
  universal deformation of $M$ is 
  given by $\wt{\rm Fil}=<\wt Y_1,\wt Y_2>_R$, where 
\[ \wt Y_1=Y_1+t_{11}X_1+t_{12}X_2,\quad \wt
  Y_2=Y_2+t_{21}X_1+t_{22}X_2,\]
and $R$ is the first order universal deformation ring. 
One computes $\<\wt Y_1,\wt Y_2\>=-t_{21}+pt_{12}$. This shows
\[
R=W(k)[[t_{11},t_{12},t_{21},t_{22}]]/((-t_{21}+pt_{12})+\grm^2),\]
where $\grm$ is the maximal ideal of
$W(k)[[t_{11},t_{12},t_{21},t_{22}]]$, and hence 
the smoothness. \qed
\end{proof}

Recall that the {\it $a$-number} of a \dieu module $M$ over $k$,
denote $a(M)$, is defined to be $\dim_k M/(F,V)M$. 
Similarly, define $a(A):=a(M(A))$ to be the $a$-number of an abelian
variety $A$.  
 
Let $x=(A,\lambda,\eta)$ be a point in $\bfA_{2,p,N}(k)$ and let $M$
be the associated quasi-polarized \dieu module. We have the following
cases. \\

(i) {\bf $a(A)\le 1$.} It follows from \cite{norman-oort}, Theorem
2.3.3 that  $x$ is a Lagrangian point. \\

(ii) {\bf $a(A)=2$.} In this case $A$ is superspecial. It is not hard to
classify the \dieu module $M$. There are two possibilities 
(notice that $k$ is algebraically closed):\\

(iia) There is a $W$-basis $X_1,X_2,Y_1,Y_2$ for $M$ with the
following properties: 
\[ FX_1=Y_1,\ FY_1=-pX_1,\ FX_2=Y_2, \ FY_2=-pX_2, \]
\[ \<X_1,Y_1\>=-\<Y_1,X_1\>=1,\quad  \<X_2,Y_2\>=-\<Y_2,X_2\>=p, \]
and the remaining pairings are zero. \\

(iib) There is a $W$-basis $X_1,X_2,Y_1,Y_2$ for $M$ with the
following properties: 
\[ FX_1=Y_1,\ FY_1=pX_1,\ FX_2=Y_2,\ FY_2=pX_2, \]
\[ \<X_1,X_2\>=-\<X_2,X_1\>=1,\quad  \<Y_1,Y_2\>=-\<Y_2,Y_1\>=p, \]
and the remaining pairings are zero. 

\begin{lemma}\label{24}
  A point $x=(A,\lambda,\eta)$ of $\bfA_{2,p,N}(k)$ lies in the case
  {\rm (iib)} above if and only if it lies in the finite set $\Lambda$. 
\end{lemma}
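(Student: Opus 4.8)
The plan is to show that the two conditions both pin down precisely the superspecial points whose polarization kernel is $\alpha_p\times\alpha_p$, so that they coincide. First I would recall that by definition $\Lambda$ consists of the superspecial points $(A,\lambda,\eta)$ with $\ker\lambda\simeq\alpha_p\times\alpha_p$. Since both case (iib) and membership in $\Lambda$ require $A$ superspecial (case (iib) is one of the two sub-cases under $a(A)=2$, which already forces superspeciality), the whole question takes place inside the superspecial points of $\bfA_{2,p,N}(k)$, and it suffices to compare the two invariants there. The key observation is that $\ker\lambda$ is computed from the quasi-polarized Dieudonné module: $\ker\lambda$ corresponds to $M^\vee/M$ where $M^\vee=\{x\in M\otimes\Q_p:\langle x,M\rangle\subset W\}$ is the dual lattice with respect to the perfect pairing on $M\otimes\Q_p$, with $F$ and $V$ acting. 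So I would translate "$\ker\lambda\simeq\alpha_p\times\alpha_p$" into a statement about the elementary divisors of the pairing matrix of $\langle\,,\,\rangle$ on $M$ together with the requirement that $F$ act as zero on $M^\vee/M$ (that is what makes the kernel $\alpha_p\times\alpha_p$ rather than, say, $\mu_p\times\mu_p$ or a non-split extension).

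Next I would carry out the comparison case by case. In case (iia), with the basis $X_1,X_2,Y_1,Y_2$ and pairings $\langle X_1,Y_1\rangle=1$, $\langle X_2,Y_2\rangle=p$, the pairing matrix has elementary divisors $(1,1,p,p)$, so $M^\vee/M$ has length $2$ and $\ker\lambda$ has order $p^2$; one checks $M^\vee=\langle X_1,p^{-1}X_2,Y_1,p^{-1}Y_2\rangle_W$ and, using $FX_2=Y_2$, $FY_2=-pX_2$, that $F(p^{-1}Y_2)=-X_2\in M$ but $F(p^{-1}X_2)=p^{-1}Y_2\notin M$, so $F$ does not act as zero on $M^\vee/M$ — the kernel contains a copy of $\mu_p$ and hence is $\mu_p\times\alpha_p$ (equivalently, the associated polarized abelian surface lifts to $W$, i.e.\ $x$ is Lagrangian). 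Thus $x\notin\Lambda$ in case (iia). In case (iib), with pairings $\langle X_1,X_2\rangle=1$, $\langle Y_1,Y_2\rangle=p$ and $FX_i=Y_i$, $FY_i=pX_i$, the same computation gives $M^\vee=\langle X_1,X_2,p^{-1}Y_1,p^{-1}Y_2\rangle_W$, and $F(p^{-1}Y_i)=X_i\in M$ while $F(p^{-1}X_i)$ is not even in play since $X_i\in M$; one verifies that $F$ (and likewise $V$) acts as zero on $M^\vee/M\cong k\oplus k$, so $\ker\lambda\simeq\alpha_p\times\alpha_p$ and $x\in\Lambda$. Conversely, any superspecial $x$ with $\ker\lambda\simeq\alpha_p\times\alpha_p$ must have $a(A)=2$ (superspecial) and cannot be in case (iia) by the above, so it is in case (iib); this gives the reverse inclusion.

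The remaining point is to be sure the list (iia), (iib) is exhaustive among superspecial quasi-polarized Dieudonné modules of this type (rank $4$, polarization degree $p^2$) over an algebraically closed $k$ — this is asserted in the paragraph preceding the lemma, so I would simply invoke it, perhaps after recalling that for $a(M)=2$ one has $FM=VM$ and $M$ is, up to isomorphism, $M_2\otimes$ (the superspecial rank-$2$ module) with the pairing determined by a Hermitian-type form over the quaternion order, whose isomorphism classes reduce to the two normal forms displayed. The main obstacle I anticipate is the bookkeeping in identifying $\ker\lambda$ with $M^\vee/M$ together with its $F,V$-module structure and then correctly reading off whether that finite group scheme is $\alpha_p\times\alpha_p$ versus $\mu_p\times\alpha_p$; once the dictionary "covariant Dieudonné module of $\ker\lambda$ $=M^\vee/M$ with induced $F,V$" is set up cleanly, each of the two cases is a short explicit matrix computation of the kind already done in the proof of Lemma~\ref{23}.
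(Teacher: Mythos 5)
Your proposal is correct and follows essentially the same route as the paper: both identify $\ker\lambda$ with the quotient $M^t/M$ of the dual lattice and decide the case by checking whether $F$ and $V$ carry $M^t$ into $M$ (equivalently, act as zero on $M^t/M$), with case (iia) failing this test and case (iib) passing it, and both reduce to the $a(A)=2$ cases since every point of $\Lambda$ is superspecial. One parenthetical slip: in case (iia) the kernel is not $\mu_p\times\alpha_p$ --- $A$ is supersingular, so $A[p]$ is local-local and contains no copy of $\mu_p$ (and $\mu_p\times\alpha_p$ is not self-dual); the group scheme there has $F\neq 0$ but $F^2=V^2=0$. This does not affect the argument, since all the lemma requires is that the kernel is not $\alpha_p\times\alpha_p$, which your computation establishes.
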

\begin{proof}
  Recall that if $H$ is the kernel of an isogeny $\varphi:A\to B$ then
  the \dieu module $M(B)/M(A)$ is isomorphic to $M^*(H^D)$, 
  where $H^D$ is the Cartier
  dual of $H$. One only needs to consider the case (ii) as the
  $a$-number is equal to $2$
  for any member in $\Lambda$. 
  In the case (iia), one has 
\[ M^t=<X_1,Y_1,\frac{1}{p} X_2, \frac{1}{p} Y_2>. \]
We have $FM^t\not\subset M$ and $VM^t\not\subset M$. Therefore,
$\ker\lambda\not\simeq \alpha_p\times \alpha_p$. 

In the case (iib), one has 
\[ M^t=<X_1,X_2,\frac{1}{p} Y_1, \frac{1}{p} Y_2>. \]
We have $FM^t\subset M$ and $VM^t\subset M$. Therefore,
$\ker\lambda \simeq \alpha_p\times \alpha_p$. This proves 
the lemma. \qed
\end{proof}

Note that the point $x$ in the case (iia) is Lagrangian. We will show
(Lemma~\ref{29}) that the
moduli space has singularities at points in the case (iib). 
It follows 
from Lemma~\ref{22} that the point $x$ in the case (iib) is not
Lagrangian.

From the discussions above, we have proven
\begin{prop}\label{25}
  The smooth locus of the structure morphism $\bfA_{2,p,N}\to
  \Spec\Z_{(p)}[\zeta_N]$ is the complement of
  the finite subset $\Lambda$.
\end{prop}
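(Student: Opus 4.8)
For the proposition, the plan is to combine the Dieudonn\'e-module analysis preceding the statement with Lemma~\ref{23}, after first disposing of the generic fibre. The generic fibre $\bfA_{2,p,N}\otimes_{\Z_{(p)}[\zeta_N]}\Q(\zeta_N)$ is a moduli space of polarized abelian surfaces with a level-$N$ structure over a field of characteristic $0$, hence is smooth; so the locus where the structure morphism $\bfA_{2,p,N}\to\Spec\Z_{(p)}[\zeta_N]$ fails to be smooth is contained in the special fibre $\calA_{2,p,N}$. It therefore suffices to show, for a point $x=(A,\lambda,\eta)\in\calA_{2,p,N}(k)$ with $k$ an algebraically closed field of characteristic $p$, that $\bfA_{2,p,N}$ is smooth at $x$ exactly when $x\notin\Lambda$.

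For the inclusion ``$x\notin\Lambda\Rightarrow$ smooth at $x$'' I would run through the trichotomy on $a(A)$ recalled above. If $a(A)\le 1$, then $x$ is Lagrangian by \cite{norman-oort}, Theorem~2.3.3. If $a(A)=2$, then $A$ is superspecial and, over the algebraically closed field $k$, the quasi-polarized Dieudonn\'e module $M(A)$ is one of the two normal forms (iia), (iib); in case (iia) I would check, using $FV=VF=p$ and the injectivity of $F$ on $M$, that $Y_i=-VX_i\in VM$ and that the displayed pairings are exactly those of Definition~\ref{21}, so that $x$ is again Lagrangian. In either case Lemma~\ref{23} shows $\bfA_{2,p,N}$ is smooth at $x$. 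By Lemma~\ref{24}, the points remaining in case (iib) are precisely the points of the finite set $\Lambda$; hence the smooth locus of the structure morphism contains the complement of $\Lambda$.

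For the reverse inclusion I would show that $\bfA_{2,p,N}$ is \emph{not} smooth over $\Spec\Z_{(p)}[\zeta_N]$ at any $x\in\Lambda$. Such an $x$ is of type (iib), so by Remark~\ref{22} it is not Lagrangian; equivalently $(A,\lambda)$ admits no lift to a polarized abelian scheme over $W(k)$. If $\bfA_{2,p,N}$ were smooth over $\Spec\Z_{(p)}[\zeta_N]$ at $x$, then, after choosing a lift $\Spec W(k)\to\Spec\Z_{(p)}[\zeta_N]$ of the residue-field map $\Spec k\to\Spec\Z_{(p)}[\zeta_N]$ (which exists since $\mu_N$ is \'etale over $\Z_{(p)}$), formal smoothness would let us lift $x$ compatibly through every quotient $W(k)/p^{n}$, and by Grothendieck existence these algebraize to a polarized abelian scheme over $W(k)$ with level structure --- a lift of $(A,\lambda,\eta)$ over $W(k)$, contradicting non-Lagrangianness. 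This gives the reverse inclusion and hence the proposition. (Lemma~\ref{29} below sharpens the conclusion: via the Grothendieck--Messing universal deformation of a type-(iib) quasi-polarized Dieudonn\'e module, the completed local ring at such an $x$ is $W(k)[[t_{11},t_{12},t_{21},t_{22}]]/(p+t_{11}t_{22}-t_{12}t_{21})$, an ordinary double point.)

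I do not expect a serious obstacle in the proof of the proposition itself; its content lies in the inputs --- the trichotomy on $a$-numbers and the normal forms (iia), (iib), the equivalence ``Lagrangian $\Leftrightarrow$ liftable over $W(k)$'' of Remark~\ref{22}, and the first-order deformation computation of Lemma~\ref{23}. The one point I would be careful about is that these cases exhaust $\calA_{2,p,N}(k)$ (they do: for $g=2$ one has $a(A)\in\{0,1,2\}$, and $a(A)=2$ forces $A$ superspecial), and that the level structure, being \'etale, plays no role in the deformation arguments.
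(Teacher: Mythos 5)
Your forward inclusion (smoothness away from $\Lambda$) is essentially the paper's argument: the trichotomy on $a$-numbers, the observation that cases (i) and (iia) give Lagrangian points, Lemma~\ref{23}, and Lemma~\ref{24} to identify the leftover case (iib) with $\Lambda$. For the reverse inclusion you take a genuinely different route. The paper's Proposition~\ref{25} forward-references Lemma~\ref{29}: non-smoothness at $x\in\Lambda$ is obtained from the explicit Grothendieck--Messing computation of Subsection~\ref{sec:23}, which shows the universal deformation ring is $W[[t_{ij}]]/(f)$ with $f\equiv p+t_{11}t_{22}-t_{12}t_{21}$ modulo $\grm_R^p$, so the special fibre is a quadric cone at $x$. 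You instead argue softly: smoothness of the structure morphism at $x$ would produce, via formal lifting and Grothendieck existence, a lift of $(A,\lambda)$ over $W(k)$, contradicting non-liftability. This is a legitimate strategy that avoids the crystalline computation (which the paper needs anyway for Theorem~\ref{210}), at the price of invoking the liftability criterion of Remark~\ref{22}.

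There is, however, a gap in your input to that soft argument: you write ``such an $x$ is of type (iib), so by Remark~\ref{22} it is not Lagrangian.'' Remark~\ref{22} only gives equivalent reformulations of the Lagrangian condition (existence of a co-torsion-free maximally isotropic sublattice inside $VM$; liftability over $W$); it does not tell you which modules satisfy it. In the paper, the non-Lagrangian-ness of type (iib) is stated as a \emph{consequence} of Lemma~\ref{29}, so quoting it as an input to prove non-smoothness is circular relative to the paper's logic. You must verify it directly, which is short: in case (iib) one has $VM=\langle Y_1,Y_2,pX_1,pX_2\rangle_W$ and $pM\subset VM$, so any rank-$2$ co-torsion-free $L\subset VM$ satisfies $L+pM=VM$ and hence contains $l_1\equiv Y_1$, $l_2\equiv Y_2 \ (\mathrm{mod}\ pM)$; since the only nonzero pairings involving $Y_1,Y_2$ are $\langle Y_1,Y_2\rangle=p$, one gets $\langle l_1,l_2\rangle\equiv p \ (\mathrm{mod}\ p^2)\neq 0$, so $L$ is never isotropic and $M$ is not Lagrangian. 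With that verification supplied, your proof of the proposition is complete.
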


\subsection{Classification of non-degenerate double points.}
\label{sec:22}
Recall that a quadratic form $Q(x)$ over a local ring $A$ is called
{\it non-degenerate} if the associated bilinear form
\[ B(x,y):=Q(x+y)-Q(x)-Q(y) \]
is a perfect pairing. We show

\begin{prop}\label{26}
  Let $A$ be a complete Noetherian local ring with separably closed 
  residue field $k$, and $\grm_A$ its maximal ideal. Let
  $R:=A[[x_1,\dots, x_n]]$ 
  be the ring of formal power series over $A$,
  $\grm_R$ its maximal ideal, and let $f(x)\in R$. Suppose 
  \begin{equation}
    \label{eq:21}
    f(x)\equiv a+Q(x) \mod \grm_R^3,
  \end{equation}
where $a$ is an element in $\grm_A$ and $Q(x)$ is a non-degenerate
quadratic form. Then there is an isomorphism 
\[ R/(f(x))\simeq R/(a'u+Q'(x)) \]
where $Q'(x)$ is any non-degenerate quadratic form over $A$, $a'$ is an
element in $\grm_A$ such that $a'\equiv a \mod \grm_A^3$, and $u$ is any unit
in $A$.  
\end{prop}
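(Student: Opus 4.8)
The plan is to prove the statement by a sequence of formal substitutions, normalizing $f(x)$ step by step so that after each change of coordinates (an automorphism of $R$ over $A$) and adjustment of the generator by a unit, we reach the desired form $a'u + Q'(x)$. The key point is that for a complete local ring, automorphisms of $R = A[[x_1,\dots,x_n]]$ fixing $A$ are plentiful: any assignment $x_i \mapsto x_i + (\text{higher order})$ extends to such an automorphism, and one has the formal implicit/inverse function theorem available.

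\medskip

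First I would reduce the quadratic part: since $Q(x)$ is non-degenerate over the local ring $A$ with separably closed residue field, and since $2$ may or may not be a unit, I would invoke a normal form for non-degenerate quadratic forms over such a ring — over a separably closed field every non-degenerate quadratic form of rank $n$ is equivalent to a fixed one (e.g. $x_1 x_2 + x_3 x_4 + \cdots$ in even rank, with an extra square or Arf-type term in odd rank / characteristic $2$), and this lifts to $A$ because $A$ is complete. Thus by a linear change of variables over $A$ I may assume $Q = Q'$ is the given target form. The content is really that any two non-degenerate quadratic forms of the same rank over $(A,\grm_A,k)$ with $k$ separably closed become equivalent after a linear $A$-change of coordinates; this is standard but is one place where the hypothesis on the residue field is used. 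Next I would absorb the cubic-and-higher terms of $f$: write $f = a + Q'(x) + g(x)$ with $g \in \grm_R^3$, and use a completion-of-the-square / Hensel-type argument. Concretely, differentiating $Q'$ gives linear forms spanning (after the pairing $B$ is perfect) the module of linear forms, so for each monomial of degree $\geq 3$ in $g$ one can cancel it by a substitution $x_i \mapsto x_i + h_i(x)$ with $h_i \in \grm_R^2$; iterating and passing to the limit (the ring is $\grm_R$-adically complete) produces an automorphism $\phi$ of $R$ over $A$ with $f \circ \phi = a + Q'(x) + (\text{terms in } \grm_A \cdot \grm_R^{\geq 1})$, i.e. all the remaining non-quadratic terms have coefficients in $\grm_A$.

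\medskip

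Then I would handle the constant-and-low-order correction terms that still lie in $\grm_A \cdot R$. At this stage $f \circ \phi = a + Q'(x) + \sum_{|I| \le 1} c_I x^I$ with $c_I \in \grm_A$ and $c_0$ congruent to a higher-order-in-$\grm_A$ correction of $a$; the linear correction $\sum c_i x_i$ with $c_i \in \grm_A$ can be swallowed into $Q'$ by one more completion-of-the-square (the perfect pairing $B$ lets us solve $B(v, -) = \sum c_i(\cdot)$ for $v$ with entries in $\grm_A$, then $x \mapsto x + v$ kills it, at the cost of changing the constant by an element of $\grm_A^2$). What remains is $f \circ \phi' = a'' + Q'(x)$ with $a'' \in \grm_A$ and $a'' \equiv a \bmod \grm_A^2$ — and in fact, tracking the construction, $\bmod \grm_A^3$, since all corrections to the constant came from squares of elements of $\grm_A$ paired against the given data. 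Finally, to get the prescribed $a'$ and the prescribed unit $u$: given any $a' \in \grm_A$ with $a' \equiv a \bmod \grm_A^3$ and any unit $u \in A^\times$, I claim $R/(a'' + Q'(x)) \simeq R/(a'u + Q'(x))$. Indeed $a'' + Q'$ and $a' u + Q'$ generate the same ideal after an automorphism: rescale, noting $Q'$ can absorb a unit scalar (replace $x_1 \mapsto u^{-1} x_1$ in $x_1 x_2 + \cdots$, which multiplies that hyperbolic summand by $u^{-1}$ — more carefully, one multiplies the whole relation by $u$, so $a'' + Q' \mapsto u a'' + u Q'$, and then a linear change restores $u Q' \to Q'$), reducing to comparing $u a''$ and $a' u$, hence $a''$ and $a'$; and $a'' + Q' \mapsto a' + Q'$ by yet another square-completion since $a'' - a' \in \grm_A^3 \subset \grm_A \cdot \grm_A^2$, which — using that the gradient ideal of $Q'$ is the full maximal-ideal-generated-by-$x_i$ locally — can be realized as $Q'(x + w) - Q'(x)$ modulo an ideal-preserving adjustment, for suitable $w$ with entries in $\grm_A^2$.

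\medskip

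The main obstacle I anticipate is the bookkeeping in the repeated completion-of-the-square when $2$ is not invertible in $A$: in characteristic $2$ one cannot literally complete the square, and the argument must instead use that $B(x,y) = Q(x+y) - Q(x) - Q(y)$ is a perfect pairing to cancel \emph{cross terms} rather than diagonal ones, while the ``pure square'' monomials $x_i^2$ with coefficients in $\grm_A$ need a separate treatment — here the separably closed residue field is essential, as it guarantees that the relevant Artin–Schreier-type or square-root equations needed to absorb those terms are solvable over $A$ by completeness. Making the limiting automorphism well-defined (convergence of the infinite composition of substitutions in the $\grm_R$-adic topology) is routine given completeness but should be stated carefully. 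I would organize the write-up so that the quadratic normalization and the residue-field hypothesis are isolated in one lemma, and the cubic-absorption is a clean iteration, so that the final juggling of $a'$ and $u$ becomes a short formal consequence.
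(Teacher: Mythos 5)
Your overall strategy is the same as the paper's (which in turn follows Freitag--Kiehl, Ch.~III, Prop.~2.3--2.4): normalize $Q$ to the target form using that $A$ is strictly Henselian, kill the terms of degree $\ge 3$ in the $x_i$ by an iterated substitution exploiting perfectness of $B$, kill the stray linear terms by a translation $x\mapsto x+v$, and finally absorb the unit $u$ by rescaling half of the variables. The paper simply cites Freitag--Kiehl for the two reduction steps you re-derive, so the only genuinely new content in its proof is the bookkeeping of the constant term --- and that is exactly where your proposal has a gap.

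Concretely: you say the leftover linear term $\sum c_ix_i$ has coefficients $c_i\in\grm_A$, that the translation $x\mapsto x+v$ solving $B(v,\cdot)=-\sum c_i(\cdot)$ has $v$ with entries in $\grm_A$, and that the constant therefore changes by an element of $\grm_A^2$; you then assert the change actually lies in $\grm_A^3$ ``since all corrections to the constant came from squares of elements of $\grm_A$.'' That justification is false as stated --- squares of elements of $\grm_A$ lie in $\grm_A^2$, not $\grm_A^3$ --- so your argument only delivers $a'\equiv a\bmod\grm_A^2$, not the claimed $\grm_A^3$. The correct observation (the content of the paper's Remark~\ref{285}) is that the hypothesis is a congruence modulo $\grm_R^3=\grm_A^3+\grm_A^2I+\grm_AI^2+I^3$ with $I=(x_1,\dots,x_n)$, not merely modulo $I^3$; hence the stray constant lies in $\grm_A^3$, the stray linear coefficients lie in $\grm_A^2$ (not just $\grm_A$), the translation vector $v$ then has entries in $\grm_A^2$, and the resulting change of constant $Q'(v)+\sum c_iv_i$ lies in $\grm_A^4\subset\grm_A^3$. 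For this to go through you must also run your degree-$\ge3$ absorption with substitutions $x_i\mapsto x_i+h_i$, $h_i\in I^2$ (no constant or linear part), so that step does not reintroduce linear terms with coefficients only in $\grm_A$; writing $h_i\in\grm_R^2$, as you do, loses that control. With these two corrections your argument closes; without them it proves a strictly weaker congruence than the proposition asserts. (The remaining ingredients --- equivalence of non-degenerate forms over the strictly Henselian $A$, convergence of the infinite composition, and the rescaling trick $x_1\mapsto u^{-1}x_1$, $x_3\mapsto u^{-1}x_3$ to trade $uQ'$ for $Q'$ --- are fine and match the paper's treatment.)
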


Since $A$ is strictly Henselian, any two non-degenerate quadratic
forms are equivalent. We may take $Q'(x)$ to be the standard split
form. By rescaling half of variables by suitable units
in $A$, one shows that
\[ R/(a'u+Q'(x))\simeq R/(a'+Q'(x)). \] 

\begin{lemma}\label{27}
  Let $R:=A[[x_1,\dots, x_n]]$ be the ring of formal power series over a
  local ring $A$. 
  Let $I$ be the ideal of $R$ generated by $x_1,\dots,x_n$ and $f(x)$
  be an element of $R$. Suppose that
  \begin{equation}
    \label{eq:22}
   f(x)\equiv a+Q(x) \mod I^3, 
  \end{equation}
where $Q(x)$ is a non-degenerate quadratic form and $a\in \grm_A$,
then there is an isomorphism 
\[ R/(f(x))\simeq R/(a+Q(x)). \]
\end{lemma}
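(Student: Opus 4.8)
The plan is to reduce this to a "completing the square" argument carried out in the power series ring $R = A[[x_1,\dots,x_n]]$, treating $a$ as a constant (an element of $\grm_A$) and $f(x)$ as a perturbation of the quadratic form $a + Q(x)$. First I would write $f(x) = a + Q(x) + g(x)$ with $g(x) \in I^3$, i.e. every monomial of $g$ has total degree $\ge 3$ in $x_1,\dots,x_n$ (coefficients allowed in $A$). The goal is to produce a continuous $A$-algebra automorphism $\varphi$ of $R$, fixing $A$ and congruent to the identity modulo $I^2$, such that $\varphi(f) = \text{(unit)} \cdot (a + Q(x))$, or at least $\varphi(a+Q(x)) = f$ up to a unit; either direction gives the asserted isomorphism of quotients. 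The point of requiring $\varphi \equiv \id \bmod I^2$ is that such automorphisms preserve the ideal $I$ and act on $I^3/I^4$, $I^2/I^3$ in a controlled way, so one can solve for $\varphi$ by successive approximation degree by degree.

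The key computation is the following. Since $Q$ is non-degenerate over $A$ — and because $A$ is local (here one does not even need strict Henselianity; one only needs that the Gram matrix of $Q$ is invertible over $A$) — the partial derivatives $\partial Q/\partial x_1,\dots,\partial Q/\partial x_n$ form a regular sequence generating $I/I^2$ up to the obvious linear change; concretely, writing $Q(x) = \sum_{i\le j} c_{ij} x_i x_j$ with the symmetric matrix $(b_{ij})$, $b_{ij}=c_{ij}$ for $i\neq j$ and $b_{ii}=2c_{ii}$, invertible over $A$, one has $x_i = \sum_j (b^{-1})_{ij}\,\tfrac{1}{?}\partial Q/\partial x_j$ type relations allowing any element of $I^3$ to be absorbed. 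I would set up the induction: suppose $\varphi_k$ is an automorphism with $\varphi_k(a+Q) \equiv f \bmod I^{k+1}$, starting from $\varphi_2 = \id$ and $k=2$. The error $f - \varphi_k(a+Q)$ lies in $I^{k+1}$; write it as $\sum_i h_i(x)\,\partial Q/\partial x_i$ with $h_i \in I^{k-1}$ (possible since the $\partial Q/\partial x_i$ span $I/I^2$ and $A$ is local, so one can divide). Then the substitution $x_i \mapsto x_i + \lambda h_i(x)$ for the right constant $\lambda$ changes $Q$ by $\sum_i h_i\,\partial Q/\partial x_i$ modulo $I^{k+2}$, killing the error one degree higher; composing gives $\varphi_{k+1}$. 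The automorphisms $\varphi_k$ converge $I$-adically (since successive corrections live in deeper and deeper powers of $I$, and $R$ is $I$-adically complete), so the limit $\varphi = \lim \varphi_k$ is a well-defined continuous $A$-automorphism of $R$ with $\varphi(a+Q) = f$ exactly. Hence $R/(f) \simeq R/(a+Q)$ via $\varphi$.

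The main obstacle I anticipate is bookkeeping the "division" step carefully: one must check that the error term, which a priori lies only in $I^{k+1}$ as an element of $R = A[[x]]$ with $A$-coefficients, can be written as an $A[[x]]$-combination of the $\partial Q/\partial x_i$ with coefficients genuinely in $I^{k-1}$, uniformly in $k$. This is where non-degeneracy of $Q$ over the local ring $A$ is used essentially — the Jacobian ideal of $Q$ equals $I$ (not merely contains a power of $I$), precisely because the Hessian is a unit — and where one must be mildly careful about the characteristic: if $2 \in A^\times$ this is immediate from the symmetric bilinear form being perfect, while in general "non-degenerate quadratic form" is defined (as in the statement, via $B(x,y) = Q(x+y)-Q(x)-Q(y)$ perfect) so that $\partial Q/\partial x_i$ still generate $I/I^2$, and the argument goes through verbatim. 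A secondary, purely formal point is to confirm that an $I$-adically continuous endomorphism of $R$ that is the identity modulo $I^2$ and fixes $A$ is automatically an automorphism (its induced map on $\mathrm{gr}_I R$ is the identity), so that $\varphi$ really is invertible and the isomorphism of quotient rings is legitimate.
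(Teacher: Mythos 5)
Your argument is correct, and it is worth noting that the paper itself does not prove this lemma at all: it simply cites Freitag--Kiehl, Chapter III, Proposition~2.3. What you have written out is the standard self-contained proof (a formal Morse lemma via successive coordinate changes), which is essentially the argument in that reference, so you have supplied the content the paper delegates. The mechanism is exactly right: since the polarization $B$ of $Q$ has invertible Gram matrix $(b_{ij})$ over the local ring $A$, the linear forms $\partial Q/\partial x_i=\sum_j b_{ij}x_j$ generate $I$, the exact identity $Q(x+h)=Q(x)+B(x,h)+Q(h)=Q(x)+\sum_j h_j\,\partial Q/\partial x_j+Q(h)$ lets you absorb an error in $I^{k+1}$ by a substitution with $h_j\in I^k$, and $I$-adic completeness of $R$ makes the composite substitution converge to an $A$-automorphism $\varphi$ with $\varphi(a+Q)=f$. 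Your remark that no Henselian hypothesis is needed here (only invertibility of the Gram matrix) and your handling of residue characteristic $2$ via the polarization rather than $\tfrac12\,\mathrm{Hess}$ are both correct and relevant, since the paper applies this with $A=W(\Fpbar)$ and $Q=t_{11}t_{22}-t_{12}t_{21}$, whose associated bilinear form is perfect in every characteristic.

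One small indexing slip: if the error $f-\varphi_k(a+Q)$ lies in $I^{k+1}$, then writing it as $\sum_i h_i\,\partial Q/\partial x_i$ forces $h_i\in I^{k}$, not $I^{k-1}$ (each $\partial Q/\partial x_i$ contributes only one power of $I$). With $h_i\in I^{k}$ the correction term $Q(h)\in I^{2k}\subset I^{k+2}$ for all $k\ge 2$, so the induction closes at every step, including the base case $k=2$; with your stated $I^{k-1}$ the bound $Q(h)\in I^{2k-2}$ would not suffice at $k=2$. This is a bookkeeping fix, not a gap in the method.
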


\begin{proof}
  This is \cite{freitag-kiehl}, Chapter III, Proposition 2.3.
\end{proof}



\begin{lemma}\label{28}
  Let $A$ be a complete Noetherian local ring and $\grm_A$ be its
  maximal ideal. Let $R$, $\grm_R$, $I$, and $f(x)$ be as before. 
  Suppose that 
  \begin{equation}
    \label{eq:23}
    f(x)\equiv a+\sum_{i=1}^n a_ix_i+Q(x) \mod I^3, 
  \end{equation}
where $a, a_i$ are elements in $\grm_A$, and $Q(x)$ is a
non-degenerate quadratic form over $A$. Then there is an isomorphism
\begin{equation}
  \label{eq:24}
  R/(f(x))\simeq R/(a'+Q'(x)) 
\end{equation}
for some element $a'\in \grm_A$ and some non-degenerate quadratic form $Q'(x)$.
\end{lemma}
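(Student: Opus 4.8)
The plan is to reduce \eqref{eq:23} to the situation already handled by Lemma~\ref{27} by absorbing the linear term $\sum_i a_i x_i$ into the quadratic part and the constant via a change of variables. Since $Q(x)$ is non-degenerate over the complete Noetherian local ring $A$, its associated bilinear form $B(x,y)$ is a perfect pairing, so for each $i$ there is an element $b_i \in A$ (in fact a $W$-linear combination of coordinates dual to $Q$) with $B(x, b) = \sum_i a_i x_i$, where $b = (b_1,\dots,b_n)$; because $a_i \in \grm_A$ we may take $b_i \in \grm_A$. First I would substitute $x_i \mapsto x_i - \tfrac{1}{2} b_i$ (here we use that $2$ is a unit, or more carefully complete the square using the bilinear form directly so as not to divide by $2$ — one writes $Q(x) + B(x,b) = Q(x + \tfrac12 b) - Q(\tfrac12 b)$ and adjusts). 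This is an automorphism of $R = A[[x_1,\dots,x_n]]$ since the shifts lie in $\grm_A$. Under this substitution the linear term is cancelled, the quadratic form $Q(x)$ is preserved (a translation does not change the degree-$2$ part), the constant $a$ is modified by an element of $\grm_A^2 \subset \grm_A$, and the cubic-and-higher error terms remain in $I^3$ (a shift by elements of $\grm_A$ sends $I^3$ into $I^3 + \grm_A \cdot(\text{lower order})$, which one checks stays absorbed after also adjusting the constant and the linear/quadratic coefficients by elements of $\grm_A$).

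More precisely, after the translation we obtain $f(x) \equiv a' + \sum_i a_i' x_i + Q'(x) \pmod{I^3}$ with $a' \in \grm_A$, each $a_i' \in \grm_A^2$, and $Q'$ still non-degenerate (its reduction mod $\grm_A$ is that of $Q$). Iterating the translation argument, the valuations of the linear coefficients in $\grm_A$ increase at each step, so in the $\grm_A$-adic limit — which converges because $A$ is complete and $R$ is $\grm_A$-adically separated and complete — the linear term is eliminated entirely, leaving $f(x) \equiv a'' + Q''(x) \pmod{I^3}$ for some $a'' \in \grm_A$ and some non-degenerate quadratic form $Q''$ over $A$. Applying Lemma~\ref{27} to this new presentation of $f$ yields $R/(f(x)) \simeq R/(a'' + Q''(x))$, which is the desired form \eqref{eq:24}.

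The step I expect to be the main obstacle is verifying that the cubic error truly stays controlled under the iteration: a translation $x_i \mapsto x_i - b_i$ with $b_i \in \grm_A^k$ applied to a cubic term produces new contributions to the constant, linear, and quadratic parts, and one must check these are all of the right order ($\grm_A^k$ or better for the linear part, and $\grm_A$ in general) so that the inductive hypothesis is maintained and the process converges. A clean way to package this is to argue $\grm_A$-adically: work modulo $\grm_A^k$ for increasing $k$, at each stage using the finite-dimensional (over $A/\grm_A^k$) completion-of-the-square argument, and then pass to the limit. Alternatively one can invoke a Weierstrass/Hensel-type argument as in \cite{freitag-kiehl} directly; but the translation-and-iterate route keeps the proof self-contained given Lemma~\ref{27}.
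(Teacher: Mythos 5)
Your proposal is correct and follows essentially the same route as the paper: a translation $x\mapsto x+b$ whose existence is guaranteed by the perfectness of the bilinear form $B$ associated to $Q$, with $b$ found by successive approximation (the higher-order terms contribute linear coefficients of strictly higher $\grm_A$-adic order, and completeness of $A$ gives convergence), after which Lemma~\ref{27} applies. The paper's sketch compresses the iteration into the single assertion that a suitable $b$ exists, and, like your corrected variant, completes the square using $B(x,b)$ directly so that no division by $2$ is needed.
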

\begin{proof}
  This is \cite{freitag-kiehl}, Chapter III, Proposition 2.4. We
  sketch the proof as it will be used to prove
  Proposition~\ref{26}. We will find an element $b=(b_1,\dots,b_n)$
  with $b_i\in \grm_A$ such that the linear term of 
  $f(x+b)$ vanishes, and apply Lemma~\ref{27}.  We have
  \begin{equation}
    \label{eq:25}
    f(x+b)\equiv f(b)+\sum_{i=1}^n a_ix_i+B(x,b)+L_b(x)+Q'(x) \mod I^3,
  \end{equation}
where $L_b(x)$ is a linear form with coefficients in the ideal
$(b_1,\dots,b_n)^2$ and $Q'(x)$ is a quadratic form with $Q'(x)\equiv
Q(x) \mod \grm_A$. Since $B$ is non-degenerate, there exists an
element $b$ such that the linear term vanishes. This proves the lemma. \qed  
\end{proof}

\begin{remark}\label{285}
  (1) If $a_i\in \grm_A^r$ for all $i$, where $r$ is a positive integer,
  then each component $b_i\in \grm_A^r$ and hence $a'=f(b)\equiv a
  \mod \grm_A^{2r}$. 

  (2) The information $a'\equiv a \mod \grm^3_A$ in
      Proposition~\ref{26}, which is not provided in
      \cite{freitag-kiehl}, Chapter III, Proposition 2.4
      (Lemma~\ref{28}), is required in the proof of the main theorem. 
\end{remark}

  

\npr {\it Proof of Proposition~\ref{26}.} Condition (\ref{eq:21})
implies that 
\begin{equation}
  \label{eq:26}
  f(x)\equiv a'+ \sum_{i=1}^n a_ix_i+Q'(x) \mod I^3, 
\end{equation}
such that $a'\equiv a \mod \grm_A^3$, each $a_i\in \grm_A^2$ and
$Q'(x)\equiv Q(x) \mod \grm_A$. By Lemma~\ref{28}, we have, for an
appropriate element $b$,
\[ f(x+b)\equiv a''+Q''(x) \mod I^3, \]
where $a''$ is an element in $\grm_A$ such that $a''\equiv a' \mod \grm_A^4$
(since $a_i\in \grm_A^2$, see Remark~\ref{285}) and $Q''(x)\equiv Q'(x) \mod
\grm_A$. Therefore, by Lemma~\ref{27},
\[ R/(f(x))\simeq R/(a''+Q''(x)),  \]
where $Q''(x)$ is a non-degenerate quadratic form and $a''$ is an
element in $\grm_A$ such that $a''\equiv a \mod \grm_A^3$. \qed

\subsection{Singularities of $\bfA_{2,p,N}$.} \label{sec:23}
Let $x=(A,\lambda,\eta)$
  be a point in $\Lambda\subset \calA_{2,p,N}(\Fpbar)$. Let $M$ be the
  associated quasi-polarized \dieu module. By Lemma~\ref{24}, we
  choose a $W$-basis 
  $X_1,X_2,Y_1,Y_2$ for $M$ as in the case (iib). It is easy to
  compute that the Hodge filtration ${\rm Fil}\subset H_1^{DR}(A)$ is
  equal to $<Y_1,Y_2>_k$, where $k=\Fpbar$. 
  Let $R^{\rm u}$ be the universal deformation ring of the
  quasi-polarized \dieu module $M$. It follows from Theorem~\ref{11}
  (1) that 
\[ R^{\rm u}=R/(f), \quad
  R:=W[[t_{11},t_{12},t_{21},t_{22}]] \]  
for some power
  series $f$. By the Grothendieck-Messing deformation theory, the
  universal deformation of $M$ over $R^{\rm u}/\grm_{R^{\rm u}}^p$ 
is given by 
\[ \wt{\rm   Fil}=<\wt Y_1,\wt Y_2>_{R^{\rm u}/\grm_{R^{\rm u}}^p}\subset
  H_1^{crys}(A/({R^{\rm u}/\grm_{R^{\rm u}}^p}))  \]
which is isotropic with respect to $\<\, ,\>$, where  
\[ \wt Y_1=Y_1+t_{11}X_1+t_{12}X_2,\quad \wt Y_2=Y_2+t_{21}X_1+t_{22}X_2.\]
One computes the relation $\<\wt Y_1,\wt
Y_2\>=t_{11}t_{22}-t_{12}t_{21}+p=0$ in $R^{\rm u}$. 
This implies that 
\begin{equation}
  \label{eq:27}
  f\equiv p+(t_{11}t_{22}-t_{12}t_{21})\mod \grm_{R}^p
\end{equation}
This shows 
\begin{lemma}\label{29}
  If $x\in \Lambda$, then the moduli space $\calA_{2,p,N}$ is not
  smooth at $x$. 
\end{lemma}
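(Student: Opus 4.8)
The plan is to deduce non-smoothness at $x\in\Lambda$ directly from the congruence \eqref{eq:27}, which pins down the defining power series $f$ of the universal deformation ring $R^{\rm u}=R/(f)$ modulo $\grm_R^p$. Since $p\ge 3$, in particular $p\ge 3>2$, the quadratic part of $f$ is seen already modulo $\grm_R^p$, so from \eqref{eq:27} we read off that $f$ has no constant term over $k$ (the constant $p$ lies in $\grm_W$, hence vanishes in the special fiber) and no linear term, while its degree-two part is the nondegenerate quadratic form $t_{11}t_{22}-t_{12}t_{21}$ over $W$. Concretely, I would argue that $f\in\grm_R^2$ but $f\notin\grm_R^3+pR$, so that the image $\bar f$ of $f$ in $k[[t_{11},t_{12},t_{21},t_{22}]]$ lies in the square of the maximal ideal; therefore the special fiber $\calA_{2,p,N}$ at $x$ has completed local ring $k[[t_{11},t_{12},t_{21},t_{22}]]/(\bar f)$ with $\bar f\in(\text{max ideal})^2$, which means the Zariski tangent space has dimension $4$ rather than the expected $3$. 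Equivalently, the Jacobian criterion fails because $\partial f/\partial t_{ij}\in\grm_R$ for all four variables.

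The one technical point to get right is the passage from "first-order/truncated universal deformation" to "genuine universal deformation ring". The universal deformation ring $R^{\rm u}$ of the quasi-polarized Dieudonné module $M$ pro-represents the deformation functor, and by Grothendieck-Messing this functor is the one computing the completed local ring of $\calA_{2,p,N}$ at $x$ (via Serre-Tate / Grothendieck-Messing identifying deformations of $(A,\lambda,\eta)$ with deformations of the $p$-divisible group with its polarization, the level-$N$ structure being étale). Theorem~\ref{11}(1) tells us this ring is a complete intersection of the expected dimension, so it is of the form $R/(f)$ for a single power series $f$; that is where the "one equation" comes from. The computation with $\wt{\rm Fil}=\langle\wt Y_1,\wt Y_2\rangle$ being isotropic forces the relation $\langle\wt Y_1,\wt Y_2\rangle=t_{11}t_{22}-t_{12}t_{21}+p\equiv 0$ to hold in $R^{\rm u}$ modulo $\grm_{R^{\rm u}}^p$, and since $R^{\rm u}=R/(f)$, this is exactly the assertion that $f$ and $p+t_{11}t_{22}-t_{12}t_{21}$ generate the same ideal modulo $\grm_R^p$, giving \eqref{eq:27}. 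I would state this last deduction carefully: $f$ generates the kernel, and the Grothendieck-Messing obstruction lands in $\grm_{R^{\rm u}}^p$-level precision because the crystal is computed over $R^{\rm u}/\grm_{R^{\rm u}}^p$ (divided powers on the kernel of $R^{\rm u}/\grm^p\to k$).

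The main obstacle, then, is not the algebra but making sure the precision is adequate: one needs $p>2$ precisely so that the $\grm_R^p$-truncation already captures the degree-$2$ Taylor coefficients of $f$, i.e. $p\ge 3$ guarantees $3\le p$ so that \eqref{eq:27} determines $f\bmod\grm_R^3$. Once that is in hand, the proof of Lemma~\ref{29} is immediate: from \eqref{eq:27}, $f\in\grm_R^2$ and $f\not\equiv 0$, so reducing mod $p$ gives $\bar f\in\grm^2$ in $k[[t_{11},t_{12},t_{21},t_{22}]]$, hence $d\bar f=0$ at the origin, hence the special fiber — and therefore $\calA_{2,p,N}$ — is singular at $x$. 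I would close by remarking that this already shows the "not smooth" half of the dichotomy in Theorem~\ref{13}; combined with Proposition~\ref{25} (smoothness away from $\Lambda$), it identifies the non-smooth locus exactly as $\Lambda$, and the finer local structure $W[[t_{11},t_{12},t_{21},t_{22}]]/(p+t_{11}t_{22}-t_{12}t_{21})$ will follow from the classification of non-degenerate double points in Proposition~\ref{26} applied to $f$ with $A=W$, $a=p$, $Q=t_{11}t_{22}-t_{12}t_{21}$.
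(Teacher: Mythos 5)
Your proposal is correct and follows essentially the same route as the paper: the paper derives the congruence $f\equiv p+(t_{11}t_{22}-t_{12}t_{21})\bmod \grm_R^p$ via Grothendieck--Messing exactly as you describe, and concludes non-smoothness from it (leaving implicit the step you spell out, namely that $\bar f\in\grm^2$ forces a $4$-dimensional tangent space on a $3$-dimensional special fiber). Your added care about the precision of the truncation for $p>2$ and the identification of $R^{\rm u}$ with the completed local ring is a faithful elaboration of the paper's argument rather than a different method.
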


\begin{thm}\label{210}
  Assume $p>2$. Let $x=(A,\lambda,\eta)$ be a point in $\Lambda$. The
  formal completion of the local ring of $\bfA_{2,p,N}$ at $x$ 
is isomorphic to 
\[  W(\Fpbar)[[t_{11},t_{12},t_{21},t_{22}]]/
(p+t_{11}t_{22}-t_{12}t_{21}), \] 
\end{thm}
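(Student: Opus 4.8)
The plan is to combine the crystalline input already assembled in Section~\ref{sec:02} with the deformation-theoretic normal form from Proposition~\ref{26}. The local ring in question is $R^{\rm u}=R/(f)$ with $R=W[[t_{11},t_{12},t_{21},t_{22}]]$, and by \eqref{eq:27} we know
\[
  f\equiv p+(t_{11}t_{22}-t_{12}t_{21})\mod\grm_R^p.
\]
In particular, writing $Q(t)=t_{11}t_{22}-t_{12}t_{21}$, we have $f\equiv p+Q(t)\mod\grm_R^3$ since $p>2$ guarantees $3\le p$ (this is exactly where the hypothesis $p>2$ enters; for $p=2$ the congruence \eqref{eq:27} is only modulo $\grm_R^2$ and this argument breaks, which is why Section~3 uses the local model instead). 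So the setup of Proposition~\ref{26} applies with $A=W(\Fpbar)$, $\grm_A=(p)$, $n=4$, $a=p\in\grm_A$, and $Q=Q(t)$ the non-degenerate quadratic form $t_{11}t_{22}-t_{12}t_{21}$ — non-degenerate because its associated bilinear form is the standard split hyperbolic pairing on a rank-$4$ module, which is perfect.

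First I would check that $W(\Fpbar)$ is a complete Noetherian local ring with separably (indeed algebraically) closed residue field $\Fpbar$, so that Proposition~\ref{26} is applicable. Then apply that proposition: it yields an isomorphism $R^{\rm u}=R/(f)\simeq R/(a'u+Q'(t))$ for \emph{any} non-degenerate quadratic form $Q'$ over $W(\Fpbar)$, any unit $u$, and any $a'\in\grm_A$ with $a'\equiv a\mod\grm_A^3$. Choosing $Q'=Q$ itself (the split form), $u=1$, and $a'=p$ — which trivially satisfies $p\equiv p\mod p^3$ — gives
\[
  R^{\rm u}\simeq W(\Fpbar)[[t_{11},t_{12},t_{21},t_{22}]]/(p+t_{11}t_{22}-t_{12}t_{21}).
\]
This is exactly the asserted description, once I recall that the formal completion of the local ring of $\bfA_{2,p,N}$ at $x$ \emph{is} the universal deformation ring $R^{\rm u}$ of the quasi-polarized \dieu module $M$ — this is the standard representability statement in Grothendieck--Messing theory together with the fact that $N\ge 3$ makes the level structure rigid, so the moduli problem is representable and its completed local rings are the universal deformation rings.

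The main obstacle, such as it is, is bookkeeping rather than conceptual: one must be careful that the relevant congruence from \eqref{eq:27} holds modulo a power of $\grm_R$ that is at least $3$, which forces the restriction $p>2$, and one must confirm that $Q(t)=t_{11}t_{22}-t_{12}t_{21}$ is genuinely non-degenerate as a quadratic form over $W(\Fpbar)$ in the sense of Section~\ref{sec:22} (the bilinear form $B(s,t)=s_{11}t_{22}+s_{22}t_{11}-s_{12}t_{21}-s_{21}t_{12}$ has determinant $\pm1$, hence is a perfect pairing). A minor subtlety is that Proposition~\ref{26} only controls $a'$ modulo $\grm_A^3$, but since we are free to take $a'=p$ on the nose there is nothing to reconcile. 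The normality statement in Theorem~\ref{13} then follows because $p+t_{11}t_{22}-t_{12}t_{21}$ cuts out a hypersurface with an isolated singularity (an ordinary double point) in a regular ring of dimension $5$, hence is regular in codimension $1$ and, being a complete intersection, is $S_2$ by Serre's criterion — but that is part of Theorem~\ref{13}, not of Theorem~\ref{210}, so for the present statement the proof ends with the displayed isomorphism.
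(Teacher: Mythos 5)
Your proof is correct and follows essentially the same route as the paper: the paper's entire proof of Theorem~\ref{210} is to apply Proposition~\ref{26} to the congruence \eqref{eq:27}, which is exactly what you do, with the additional (correct) bookkeeping that $p>2$ ensures $\grm_R^p\subset\grm_R^3$ and that $t_{11}t_{22}-t_{12}t_{21}$ is non-degenerate. The extra details you supply (identification of the completed local ring with the universal deformation ring, the choice $a'=p$, $u=1$, $Q'=Q$) are exactly the ones the paper leaves implicit.
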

\begin{proof}
  This follows immediately from Proposition~\ref{26} and (\ref{eq:27}).
\end{proof}

\section{Singularities using the local model method}
\label{sec:03}

In this section we use the method of local models to determine the
singularities of the moduli scheme $\bfA_{2,p.N}$, including the case
where $p=2$. Our references are de Jong \cite{dejong:gamma} and
Rapoport-Zink \cite{rapoport-zink}.

\subsection{Local model diagrams.} 
\label{sec:31} Let $\Lambda_1:=\Z_p^4$ and $e_1,\dots, e_{4}$ be the standard
basis. Let $\psi$ be the alternating pairing on $\Lambda_1$ so that
its representing matrix with respect to the standard basis is  
\[ 
\begin{pmatrix}
  0 & I' \\ - I' & 0
\end{pmatrix},\quad I'=
\begin{pmatrix}
  0 & p \\ 1 & 0
\end{pmatrix}. \]
Let $\bfM^{\rm loc}$ be the local model associated to the lattice $\Lambda_1$. 
This is the projective scheme over $\Zp$ 
representing the following functor. For any $\Zp$-scheme $S$, the set 
$\bfM^{\rm loc}(S)$ of its $S$-valued points is the set of locally
free $\calO_S$-submodules $\calF\subset \Lambda_{1}\otimes \calO_S$, 
locally on $S$ direct summands of $\Lambda_{1}\otimes \calO_S$, 
which are isotropic with respect to the pairing $\psi$. 

Let $\wt \bfA_{2,p,N}$ be the moduli space over $\Zp[\zeta_N]$ 
parametrizing equivalence classes of objects $(\ul A,\xi)$, where 
$\ul A$ is an object in $\bfA_{2,p,N}$ and
\[ \xi:H_1^{\rm DR}(A/S)\simeq \Lambda_1\otimes \calO_S \] 
is an isomorphism which preserves the polarizations. 
Write $\wt \calA_{2,p,N}$ for the reduction $\wt \bfA_{2,p,N}\otimes
\Fpbar$ modulo $p$. 

Let $\calG$ be the group scheme over $\Zp$
  representing 
  the functor $S\mapsto \Aut (\Lambda_1\otimes
  \calO_S,\psi)$. This group acts on the schemes $\wt \bfA_{2,p,N}$ and
  $\bfM^{\rm loc}$ from the left.
We have the following (local model) diagram 
\[ \xymatrix{
 & \wt \bfA_{2,p,N} \ar[ld]_{\varphi_1} \ar[rd]^{\varphi_2} & \\
\bfA_{2,p,N} & & \bfM^{\rm loc}\otimes \Zp[\zeta_N],   
} \]
where 
\begin{itemize}
\item $\varphi_2$ is the morphism that sends each object $(\ul
A_\bullet,\xi)$ to 
the image $\xi(\omega)$ of the Hodge submodule $\omega
\subset H_1^{\rm DR} (A/S)$, and
\item $\varphi_1$ is the morphism that forgets the trivialization $\xi$. 
\end{itemize}

We know that
\begin{itemize}
\item [(i)]the morphism $\varphi_2$ is $\calG$-equivalent, surjective and
  smooth, and has the same relative dimension as $\varphi_1$ does, and  

\item [(ii)] the morphism $\varphi_1:\wt \bfA_{2,p,N}\to \bfA_{2,p,N}$ 
  is a $\calG$-torsor.  
\end{itemize}

\subsection{Singularities of $\bfA_{2,p,N}$}
\label{sec:32}

Let $x=(A,\lambda,\eta)$
  be a point in $\Lambda\subset \calA_{2,p,N}(k)$, where $k=\Fpbar$. 
Let $M$ be the
  associated quasi-polarized \dieu module. By Lemma~\ref{24}, we
  choose a $W$-basis 
  $X_1,X_2,Y_1,Y_2$ for $M$ as in the case (iib). We choose 
  a trivialization $\xi$ which sends $Y_1,X_1, X_2, Y_2$ to $e_1,
  e_2,e_3, e_4$, respectively. This defines a point $y\in \wt
  \calA_{2,p,N}$ mapping to $x$. We put
  $z=\varphi_2(y)\in \bfM^{\rm loc}(\Fpbar)$, which is given by
\[ <e_1,e_4>_k\subset \Lambda_1\otimes k. \]
Around the point $z$ we choose coordinates same as in
Subsection~\ref{sec:23} so that an affine open chart $\calU$ is 
\[ \Spec
W[t_{11},t_{12},t_{21},t_{22}]/(p+t_{11}t_{22}-t_{12}t_{21}). \]

Let $R_x$ (resp. $R_y$ and $R_z$) be the completion of the local ring
of $\calA_{2,p,N}$ at $x$ (resp. of $\wt
  \calA_{2,p,N}$ at $y$ and of  $\bfM^{\rm loc}$ at $z$). We have 
  \begin{equation}
    \label{eq:31}
   R_x[[t_1,t_2,t_3]]\simeq R_y\simeq R_z[[s_1,s_2,s_3]], 
  \end{equation}
and 
\[ R_z\simeq
W[[t_{11},t_{12},t_{21},t_{22}]]/(p+t_{11}t_{22}-t_{12}t_{21}). \]
By \cite{dejong:gamma}, Lemma 4.7, there is an isomorphism
\begin{equation}
  \label{eq:32}
  R_x\simeq R_z\simeq
  W[[t_{11},t_{12},t_{21},t_{22}]]/(p+t_{11}t_{22}-t_{12}t_{21}).   
\end{equation}
This shows Theorem~\ref{13}. 

\begin{remark}
  The local model is a very effective method for computing local moduli
  spaces. However, one can apply it in the special case when the
  kernel of the polarization is contained in the $p$-torsion subgroup. 
\end{remark}


\section{Some fine information from the Cartier theory}
\label{sec:04}

As experts all know that there is another important tool for computing
local moduli spaces -- using the Cartier theory, we discuss our
situation with this method. First of all, the method works exclusively 
only in \ch $p$, namely the best information we can get is the 
local moduli spaces modulo $p$. In \cite{norman:algo} Norman
established the theoretic background for an algorithm for computing local 
moduli spaces. In principle the algorithm can only compute 
the coordinate ring of the universal deformation space modulo 
a specific power of the maximal ideal. 
However, since the singularity is determined by its
sufficiently well approximation, one would be able to determine 
the singularity eventually. 
Norman gave two examples in \cite{norman:algo} and the
first example is exactly the case of $\calA_{2,p,N}$ at points in
$\Lambda$.    

Let $M$ be the \dieu module associated to a point $x$ in $\Lambda$. 
We can choose basis $\{e_i\}$ for $M$ so that (cf. (iib)):
\begin{equation}
  \label{eq:41}
  \begin{split}
    Fe_1 =e_3,\quad  Fe_2=e_4, \\
    e_3=V e_1, \quad  e_4=Ve_2. \\  
  \end{split}
\end{equation}
and that the quasi-polarization $P$ from $M$ to its dual $M^t$ is
given by 
\[ P(e_1)=f_2, \quad P(e_2)=-f_1, \]
where $\{f_i\}$ is the dual basis for $M^t$. Using the theory of
displays, one constructs the equi-\ch universal deformation $\wt M$ over the
universal deformation ring $R_0=k[[t_{11}, t_{12},t_{21}, t_{22}]]$, which
is generated by $\{e_i\}$ over the Cartier ring ${\rm Cart}_p(R_0)$
(the ring $A_{R_0}$ in \cite{norman:algo}) 
with the following relations
\begin{equation}
  \label{eq:42}
  \begin{split}
    Fe_1&=e_3+T_{11}e_1+T_{12}e_2, \\
    Fe_2&=e_4+ T_{21}e_1+T_{22}e_2, \\
    e_3&=V e_1,\\ 
    e_4&=Ve_2, \\  
  \end{split}
\end{equation}
where $T_{ij}=[t_{ij}]$ is the Teichm\"uller lifting of $t_{ij}$.
Let $f\in R_0$ be the defining equation (up to a unit) 
of  the maximal closed formal subscheme over which the 
quasi-polarization $P$ extends. 
Norman showed that (when $p>2$):
\begin{equation}
  \label{eq:43}
  f= t_{11}t_{22}-t_{12}t_{21}+r(t),\quad \text{for some } r(t)\in
  (t_{ij})^p.  
\end{equation}
Since the singularity of $R_0/(f)$ is determined by its leading term, one
computes the local moduli space in \ch $p$. However, the following
lemma shows that the calculation of the remaining term $r(t)$
is required in order to determine the non-ordinary locus.
\begin{lemma}\label{41}
  The non-ordinary locus of the local moduli space $\Spec R_0/(f)$ is 
  $\Spec R_0/(t_{11}t_{22}-t_{12}t_{21},f)$.
\end{lemma}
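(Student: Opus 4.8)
The plan is to identify the non-ordinary locus of $\Spec R_0/(f)$ with the zero scheme of the Hasse invariant of the universal family, and then to compute that invariant directly from the display relations (\ref{eq:42}). Recall that for a family of abelian surfaces the non-ordinary locus is the zero scheme of the Hasse invariant of the universal abelian scheme --- this is the way the $p$-rank stratification is realized in codimension one (cf.\ \cite{norman-oort}). As non-ordinariness depends only on the universal $p$-divisible group, and not on whether the quasi-polarization $P$ extends, the non-ordinary locus of $\Spec R_0/(f)$ is the scheme-theoretic intersection of $\Spec R_0/(f)$ with the non-ordinary locus of $\Spec R_0$; so it suffices to determine the latter.

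First I would pass to the de Rham realization attached to the display (\ref{eq:42}): a free $R_0$-module $H_1^{\rm DR}$ with basis $\bar e_1,\dots,\bar e_4$, Hodge filtration ${\rm Fil}=\<\bar e_3,\bar e_4\>_{R_0}$, and $\sigma$-linear Frobenius $\bar F$ obtained by reducing the operators of (\ref{eq:42}) modulo $p$:
\[ \bar F\bar e_1=\bar e_3+t_{11}\bar e_1+t_{12}\bar e_2,\qquad \bar F\bar e_2=\bar e_4+t_{21}\bar e_1+t_{22}\bar e_2, \]
together with $\bar F\bar e_3=\bar F\bar e_4=0$, the latter because $\bar e_3=\bar V\bar e_1$, $\bar e_4=\bar V\bar e_2$ and $\bar F\bar V=0$ in characteristic $p$. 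Then $\bar F$ annihilates ${\rm Fil}$, and the endomorphism it induces on $H_1^{\rm DR}/{\rm Fil}=\<\bar e_1,\bar e_2\>$ has matrix $\left(\begin{smallmatrix}t_{11}&t_{21}\\ t_{12}&t_{22}\end{smallmatrix}\right)$. Hence the Hasse invariant of the universal family is $t_{11}t_{22}-t_{12}t_{21}$ up to a unit of $R_0$, the non-ordinary locus of $\Spec R_0$ is $\Spec R_0/(t_{11}t_{22}-t_{12}t_{21})$, and intersecting with $\Spec R_0/(f)$ gives $\Spec R_0/(t_{11}t_{22}-t_{12}t_{21},\,f)$, as claimed. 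Since $f\equiv t_{11}t_{22}-t_{12}t_{21}\bmod (t_{ij})^p$, this equals $\Spec R_0/(t_{11}t_{22}-t_{12}t_{21},\,r(t))$, which genuinely involves $r(t)$ --- the point of the lemma.

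To double-check the set-theoretic content without invoking functoriality of the Hasse invariant, one can iterate $\bar F$ on $H_1^{\rm DR}$ itself: its image is the rank-two summand $\<t_{11}\bar e_1+t_{12}\bar e_2+\bar e_3,\ t_{21}\bar e_1+t_{22}\bar e_2+\bar e_4\>$, on which $\bar F$ acts through the matrix $\left(\begin{smallmatrix}t_{11}^p&t_{21}^p\\ t_{12}^p&t_{22}^p\end{smallmatrix}\right)$, of determinant $(t_{11}t_{22}-t_{12}t_{21})^{p}$; so the stable rank of $\bar F$ --- the $p$-rank of the fibre --- is $2$ away from $V(t_{11}t_{22}-t_{12}t_{21})$ and $\le 1$ on it.

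The hard part is getting the scheme structure exactly right, i.e.\ that the function cutting out $V_{\le 1}$ is $t_{11}t_{22}-t_{12}t_{21}$ itself and not a $p$-power of it. This relies on using the Hasse invariant in its correct normalization --- the Frobenius induced on $H_1^{\rm DR}/{\rm Fil}$, whose determinant is exactly $t_{11}t_{22}-t_{12}t_{21}$, rather than $\bar F$ on its own image, which would produce the $p$-th power --- together with the fact that $V_{\le1}$ is the reduced divisor it defines. A remaining routine point, supplied by standard display theory, is the justification that reducing the operators of (\ref{eq:42}) modulo $p$ really computes the Frobenius on the de Rham homology of the universal family.
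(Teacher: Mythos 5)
Your proof is correct and follows essentially the same route as the paper: the paper's argument is precisely that the Frobenius induced on the tangent space $\wt M/V\wt M$ of the universal deformation has matrix $(t_{ij})$, so the non-ordinary locus is cut out by its determinant $t_{11}t_{22}-t_{12}t_{21}$, which is your Hasse-invariant computation on $H_1^{\rm DR}/{\rm Fil}$. Your additional checks (the set-theoretic verification by iterating $\bar F$ and the remark about the correct normalization avoiding a $p$-th power) are sound elaborations of the same one-line idea.
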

\begin{proof}
  The Frobenius map on the tangent space $\wt M/V\wt M$ of the
  universal deformation $\wt M$ is given by
\[     Fe_1=t_{11}e_1+t_{12}e_2, \quad 
    Fe_2=t_{21}e_1+t_{22}e_2. \]
Therefore, the defining equation of the non-ordinary locus is
$t_{11}t_{22}-t_{12}t_{21}$. \qed
\end{proof}




\npr{\bf Acknowledgments.} The author thanks J.~Tilouine for his
interest on the work. The research was partially
supported by the grants NSC 97-2115-M-001-015-MY3 and AS-98-CDA-M01.


\begin{thebibliography}{99}
\def\jams{{\it J. Amer. Math. Soc.}} 
\def\invent{{\it Invent. Math.}} 
\def\ann{{\it Ann. Math.}} 
\def\ihes{{\it Inst. Hautes \'Etudes Sci. Publ. Math.}} 

\def\ecole{{\it Ann. Sci. \'Ecole Norm. Sup.}}
\def\ecole4{{\it Ann. Sci. \'Ecole Norm. Sup. (4)}} 
\def\mathann{{\it Math. Ann.}} 
\def\duke{{\it Duke Math. J.}} 
\def\jag{{\it J. Algebraic Geom.}} 
\def\advmath{{\it Adv. Math.}}
\def\compos{{\it Compositio Math.}} 
\def\ajm{{\it Amer. J. Math.}} 
\def\grenoble{{\it Ann. Inst. Fourier (Grenoble)}}
\def\crelle{{\it J. Reine Angew. Math.}}
\def\mrl{{\it Math. Res. Lett.}}
\def\imrn{{\it Int. Math. Res. Not.}}
\def\acad{{\it Proc. Nat. Acad. Sci. USA}}
\def\tams{{\it Trans. Amer. Math. Sci.}}
\def\cras{{\it C. R. Acad. Sci. Paris S\'er. I Math.}} 
\def\mathz{{\it Math. Z.}} 
\def\cmh{{\it Comment. Math. Helv.}}
\def\docmath{{\it Doc. Math. }}
\def\asian{{\it Asian J. Math.}}
\def\jussieu{{\it J. Inst. Math. Jussieu}} 

\def\manmath{{\it Manuscripta Math.}} 
\def\jnt{{\it J. Number Theory}} 
\def\ijm{{\it Israel J. Math.}}
\def\ja{{\it J. Algebra}} 
\def\pams{{\it Proc. Amer. Math. Sci.}}
\def\smfmemoir{{\it Bull. Soc. Math. France, Memoire}}
\def\bsmf{{\it Bull. Soc. Math. France}}
\def\sb{{\it S\'em. Bourbaki Exp.}}
\def\jpaa{{\it J. Pure Appl. Algebra}}
\def\jems{{\it J. Eur. Math. Soc. (JEMS)}}
\def\jtokyo{{\it J. Fac. Sci. Univ. Tokyo}}
\def\cjm{{\it Canad. J. Math.}}
\def\jaums{{\it J. Australian Math. Soc.}}
\def\pspm{{\it Proc. Symp. Pure. Math.}}
\def\ast{{\it Ast\'eriques}}
\def\pamq{{\it Pure Appl. Math. Q.}}
\def\nagoya{{\it Nagoya Math. J.}}
\def\forum{{\it Forum Math. }}
\def\tjm{{\it Taiwanese J. Math.}}

\def\tp{{To appear}}

\newcommand{\princeton}[1]{Ann. Math. Studies #1, Princeton
  Univ. Press}

\newcommand{\LNM}[1]{Lecture Notes in Math., vol. #1, Springer-Verlag}

\bibitem{dejong:ag} A.J. de Jong, The moduli spaces of polarized
  abelian varieties. \mathann~{\bf 295} (1993), 485--503. 
 
\bibitem{dejong:gamma} A.J. de Jong, The moduli spaces of principally
  polarized abelian varieties with $\Gamma_0 (p)$-level structure. 
  \jag~{\bf 2} (1993), 667--688. 

\bibitem{sga7_2} {\it Groupes de monodromie en g\'eom\'etrie
  alg\'ebrique. II.} S\'eminaire de G\'eom\'etrie Alg\'ebrique du
  Bois-Marie 1967--1969 (SGA 7 II). Dirig\'e par P. Deligne et
  N. Katz. \LNM{340}, 1973.

\bibitem{freitag-kiehl} E.~Freitag and R.~Kiehl, {\it Etale
  cohomology and the Weil conjecture.} 
  {\it Ergebnisse der Mathematik und ihrer
  Grenzgebiete (3)}, {\bf 13.} Springer-Verlag, Berlin, 1988. xviii+317 pp.  

\bibitem{grothendieck:bt} A. Grothendieck, {\it Groupes de
    Barsotti-Tate et Cristaux de Dieudonn\'e}. Les Presses de
    l'Universit\'e de Montr\'eal, 1974.

\bibitem{koblitz:thesis} N. Koblitz, $p$-adic variant of the
  zeta-function of families of varieties defined over finite fields. 
  \compos~{\bf 31} (1975), 119--218.

\bibitem{messing:bt} W. Messing, {\it The crystals associated to
  Barsotti-Tate groups: with applications to abelian schemes.} 
  Lecture Notes in Math. 264, Springer-Verlag, 1972. 

\bibitem{norman:algo} P. Norman, An algorithm for computing moduli of
  abelian varieties. \ann~{\bf 101} (1975), 499--509.

\bibitem{norman-oort} P.~Norman and F.~Oort, Moduli of abelian varieties,
  \ann~{\bf 112} (1980), 413--439. 

\bibitem{oort:olso} F. Oort: Finite group schemes, local moduli
    for abelian varieties, and lifting problems. {\it Algebraic
    Geometry} (Olso) 223--254 (1972). 

\bibitem{rapoport-zink} M. Rapoport and Th. Zink, {\it Period Spaces
    for $p$-divisible groups}. \princeton{141}, 1996.

\bibitem{tilouine:coates} J. Tilouine, Siegel Varieties and 
p-Adic Siegel Modular Forms. 
\docmath Extra Volume: John H.~Coates' Sixtieth Birthday 
(2006) 781--817.

\bibitem{yu:lift} C.-F. Yu, Lifting abelian varieties with additional
  structures. \mathz~{\bf 242} (2002), 427--441. 

\bibitem{yu:ss_siegel} C.-F. Yu, The supersingular loci and mass
  formulas on Siegel modular varieties. \docmath~{\bf 11} (2006),
  449--468.  

\end{thebibliography}
\end{document}